\newcounter{defnum}
\newtheorem{definition}{Definition}[defnum]
\newcounter{lemnum}
\newtheorem{lemma}{Lemma}[lemnum]
\newcounter{remnum}
\newtheorem{remark}{Remark}[remnum]
\newcounter{satznum}
\newtheorem{theorem}{Theorem}[satznum]
\newenvironment{example}
 {\begin{trivlist}\item[]{\bf Example.}}
 {\end{trivlist}}
\gdef\me{\mathbb{E}} 
\gdef\nz{\mathbb{N}} 
\gdef\pr{\mathbb{P}} 
\gdef\rz{\mathbb{R}} 
\gdef\gz{\mathbb{Z}} 
\newcounter{todocounter}
\def\@MRExtract#1 #2!{#1}
\newcommand{\MR}[1]{
  \xdef\@MRSTRIP{\@MRExtract#1 !}
  \href{http://www.ams.org/mathscinet-getitem?mr=\@MRSTRIP}{MR\@MRSTRIP}}
\begin{document}
\twocolumn[ 
\begin{@twocolumnfalse}
\centering
   \section*{ON MULTI-TYPE CANNINGS MODELS AND MULTI-TYPE EXCHANGEABLE COALESCENTS}
   {\sc Martin M\"ohle}\footnotemark
%
\begin{center}
   Date: \today\\
\end{center}
\begin{abstract}
   A multi-type neutral Cannings population model with mutation and fixed subpopulation sizes is analyzed. Under appropriate conditions, as all subpopulation sizes tend to infinity, the ancestral process, properly time-scaled, converges to a multi-type exchangeable coalescent with mutation sharing the exchangeability and consistency property. The proof gains from coalescent theory for single-type Cannings models and from decompositions into reproductive and mutational parts. The second part deals with a different but closely related multi-type Cannings model with mutation and fixed total population size but stochastically varying subpopulation sizes. The latter model is analyzed forward and backward in time with an emphasis on its behaviour as the total population size tends to infinity. Forward in time, multitype limiting branching processes arise for large population size. Its backward structure and related open problems are briefly discussed.

   \vspace{2mm}

   \noindent Keywords: Consistency; exchangeability; mutation; multi-type branching process; multi-type Cannings model; multi-type coalescent

   \vspace{2mm}

   \noindent 2020 Mathematics Subject Classification:
            Primary 60J90; 
            60J10 
            Secondary 92D15; 
            92D25 
\end{abstract}

\vspace{5mm}

\end{@twocolumnfalse}
] 

\footnotetext{Mathematisches Institut, Eberhard Karls Universit\"at T\"ubingen, Auf der Morgenstelle 10, 72076 T\"ubingen, Germany, E-mail address: martin.moehle@uni-tuebingen.de}


\section{Introduction}

Multi-type models play an important role in mathematical population genetics and evolutionary game theory. Many of the early multi-type population models belong to the class of multi-type branching processes (Athreya and Ney \cite{AthreyaNey1972}, Harris \cite{Harris1963}) but also different multi-type population models have been studied in the early literature, for example based on Poissonian or renewal inputs (Port \cite{Port1968}). In all these models, each individual has a certain type which determines (the distribution of) the number of offspring of that individual of any type. In applications, individuals might be objects like genes in biology, particles in physics or balls in combinatorial urn models. More recent and more advanced models even allow for genetic forces like selection and recombination. The reader is referred to the book of Ewens \cite{Ewens2004} for an overview on models in mathematical population genetics. We also refer exemplary to the works of Etheridge \cite{Etheridge2011}, Etheridge and Griffiths \cite{EtheridgeGriffiths2009}, Etheridge, Griffiths and Taylor \cite{EtheridgeGriffithsTaylor2010} and Griffiths \cite{Griffiths1981} to point to (multi-type) models including biological forces such as selection and recombination. We are however interested in the neutral case where neither selection nor recombination acts in the population. To avoid technical difficulties it is assumed that the space $E$ of possible types is finite or countable infinite. Our intension is to study some neutral multi-type population models with non-overlapping generations and mutation in the spirit of Cannings \cite{Cannings1974,Cannings1975,Cannings1976}. In the first model studied in Section \ref{first}, all subpopulation sizes are constant, whereas in the second model studied in Section \ref{second}, the total population size is constant.

Before we turn to these multi-type models let us mention some fundamental and well-known results concerning single-type Cannings models. Cannings studied a population model with non-overlapping generations and a fixed number $N\in\nz:=\{1,2,\ldots\}$ of individuals in each generation $r\in\gz:=\{\ldots,-1,0,1,\ldots\}$. Each individual $i\in[N]:=\{1,\ldots,N\}$ alive in generation $r\in\gz$ produces a random number $\nu_{i,N}^{(r)}$ of offspring and these offspring form the next generation. It is assumed that, for each generation $r$, the offspring sizes $\nu_{i,N}^{(r)}$, $i\in[N]$, are exchangeable and that the offspring vectors $\nu_N^{(r)}:=(\nu_{1,N}^{(r)},\ldots,\nu_{N,N}^{(r)})$, $r\in\gz$, are independent and identically distributed (iid) over the generations. Since the total population size is assumed to be constant equal to $N$, the relation $\nu_{1,N}^{(r)}+\cdots+\nu_{N,N}^{(r)}=N$ holds for all generations $r$. We define $\nu_{i,N}:=\nu_{i,N}^{(0)}$ for convenience for all $N\in\nz$ and $i\in[N]$.

We are mainly interested in the ancestral structure of these models. For $n\in\nz$ let ${\cal P}_n$ denote the set of partitions of $[n]:=\{1,\ldots,n\}$. Take a sample of $n\in[N]$ individuals from generation $0$. For each $r\in\nz_0:=\{0,1,\ldots\}$ define a random partition $\Pi_r:=\Pi_r^{(n,N)}$ by the property that $i,j\in[n]$ belong to the same block of $\Pi_r$ if and only if the individuals $i$ and $j$ share a common ancestor $r$ generations backward in time. It is well known that $(\Pi_r)_{r\in\nz_0}$ is a time-homogeneous Markov chain with state space ${\cal P}_n$ and initial state $\{\{1\},\ldots,\{n\}\}$. For $x\in\rz$ define $(x)_0:=1$ and $(x)_j:=x(x-1)\cdots(x-j+1)$ for all $j\in\nz$. Transitions from $\pi\in{\cal P}_n$ to $\pi'\in{\cal P}_n$ are only possible with positive probability if each block of $\pi'$ is a union of some blocks of $\pi$. In this case the transition probability $p_{\pi\pi'}:=\pr(\Pi_r=\pi'\,|\,\Pi_{r-1}=\pi)$ is given by (see \cite[Eq.~(3)]{MoehleSagitov2001})
\begin{equation} \label{p}
p_{\pi\pi'}\ =\ \frac{(N)_j}{(N)_i}\me((\nu_{1,N})_{i_1}\cdots(\nu_{j,N})_{i_j}),
\end{equation}
where $i$  and $j$ denote the number of blocks of $\pi$ and $\pi'$ respectively and $i_1,\ldots,i_j$ are the group sizes of merging blocks of $\pi$. Note that $i_1+\cdots+i_j=i$. The functions $\Phi_j^{(N)}:\{(k_1,\ldots,k_j)\in\nz^j:k_1+\cdots+k_j\le N\}\to[0,1]$, $N\in\nz$, $j\in[N]$, defined via
\begin{equation} \label{phij}
   \Phi_j^{(N)}(k_1,\ldots,k_j)\ :=\ \frac{(N)_j}{(N)_k}\me((\nu_{1,N})_{k_1}\cdots(\nu_{j,N})_{k_j})
\end{equation}
for all $k_1,\ldots,k_j\in\nz$ with $k:=k_1+\cdots+k_j\le N$, therefore play a key role in the analysis of the ancestral structure of Cannings population models. In particular, for $N\in\nz\setminus\{1\}$,
\begin{equation} \label{coalprob}
   c_N\ :=\ \Phi_1^{(N)}(2)\ =\ \frac{\me((\nu_{1,N})_2)}{N-1}\ =\ \frac{{\rm Var}(\nu_{1,N})}{N-1}
\end{equation}
is the probability that two individuals, randomly sampled from some generation, share a common ancestor one generation backward in time. This probability, called the \emph{coalescence probability}, is of fundamental interest in coalescent theory, since $1/c_N$ is the time-scale in order to obtain convergence of the ancestral process as the total population size $N$ tends to infinity. The quantity $N_e:=1/c_N$ is called the \emph{effective population size} of the Cannings model. The transition functions $\Phi_j^{(N)}$, $j\in\nz$, satisfy the consistency relation (see \cite[Eqs.~(3) and (4)]{Moehle2002})
\begin{eqnarray}
   &   & \hspace{-10mm}\Phi_j^{(N)}(k_1,\ldots,k_j)\ =\ \Phi_{j+1}^{(N)}(k_1,\ldots,k_j,1)\nonumber\\
   &   & +\sum_{i=1}^j\Phi_j^{(N)}(k_1,\ldots,k_{i-1},k_i+1,k_{i+1},\ldots,k_j),
         \label{consistency}
\end{eqnarray}
$k_1,\ldots,k_j\in\nz$ with $k_1+\cdots+k_j<N$, and these functions are monotone in the sense that
\begin{equation} \label{monotone}
   \Phi_j^{(N)}(k_1,\ldots,k_j)\ \le\ \Phi_\ell^{(N)}(m_1,\ldots,m_\ell)
\end{equation}
for all $1\le\ell\le j\le N$ and $k_1,\ldots,k_j,m_1,\ldots,m_\ell\in\nz$ with $k_1\ge m_1,\ldots,k_\ell\ge m_\ell$ and $k_1+\cdots+k_j\le N$. Note that (\ref{monotone}) follows from (\ref{consistency}) by induction on the difference $j-\ell$. The consistency (\ref{consistency}) and the monotonicity (\ref{monotone}) play a key role (see, for example, \cite{MoehleSagitov2001}) in the analysis of the ancestral structure of Cannings models as $N\to\infty$, which is the main reason why we mention these two properties already in the introduction.

The paper is organized as follows. In Section \ref{first}, a multi-type Cannings model with fixed subpopulation sizes is introduced and its ancestral structure is analyzed in detail. The main convergence result (Theorem \ref{main}) is provided and verified in Subsection \ref{limiting}. Multi-type exchangeable coalescents with mutation and their block counting processes are considered in Subsections \ref{multicoal} and \ref{blockcounting} respectively.

A closely related multi-type Cannings model with fixed total population size but variable subpopulation sizes is studied in Section \ref{second}. Subsection \ref{forwardstructure} studies its structure forward in time. A limiting multi-type branching process is provided in Subsection \ref{branching}. Some backward results on this model are discussed in Subsection \ref{somebackward}. Details on the particular multi-type Kimura model are deferred to Subsection \ref{kimuramodel}. The paper finishes with a summary and discussion of some open problems in Section \ref{discussion}.

\section{A multi-type Cannings population model with fixed subpopulation sizes} \label{first}

It is assumed that the type space $E$ is finite or countable infinite and that the number of individuals of type $k\in E$ does not change over the generations and is hence equal to some given constant $N_k\in\nz$. Reproduction within each subpopulation of type $k\in E$ is assumed to take place according to a neutral Cannings population model (as described in the introduction) with population size $N_k$ and offspring sizes $\nu_{i,N_k,k}$, $i\in\{1,\ldots,N_k\}$. The additional index $k$ indicates that (the distribution of) the number $\nu_{i,N_k,k}$ of offspring of the $i$-th individual in subpopulation $k$ is allowed to explicitly depend on $k$. Offspring sizes in different subpopulations or in different generations are assumed to be independent.

In each generation, a mutation step follows the reproduction step. A given number $N_{k\ell}$ of the $N_k$ children born in subpopulation $k\in E$ mutate to type $\ell\in E$ with $\ell\ne k$. Note that $\sum_{\ell\ne k}N_{k\ell}\in\{0,\ldots,N_k\}$ and that $N_{kk}:=N_k-\sum_{\ell\ne k}N_{k\ell}$ of the children born in subpopulation $k$ do not mutate and, hence, keep their type $k$. Since the size of each subpopulation is assumed to be constant, the conservation equalities
\begin{equation} \label{conservative}
   \sum_{\ell\ne k}N_{k\ell}\ =\ \sum_{\ell\ne k}N_{\ell k},\qquad k\in E,
\end{equation}
are required. Particular models of this form with subpopulation sizes $N_k=2c_kN$ for some $c_k,N\in\nz$, and Wright--Fisher reproduction in each subpopulation have been studied by Notohara \cite{Notohara1990} and (Wilkinson--)Herbots \cite{Herbots1994,Herbots1997,WilkinsonHerbots1998}. In these works types are interpreted as colonies and mutation as migration between these colonies.

Each children born in subpopulation $k\in E$ has probability $N_{k\ell}/N_k$ to mutate to type $\ell\in E$ with $\ell\ne k$. However, since the subpopulation sizes are constant, the individuals do not mutate independently. For any given sample of $n_k$ ($\in[N_k]$) children taken from the $N_k$ children born in subpopulation $k\in E$, and for given integers $n_{k\ell}\in\nz_0$, $\ell\ne k$, with $\sum_{\ell\ne k}n_{k\ell}\le n_k$, the probability that, for all $\ell\in E$ with $\ell\ne k$, $n_{k\ell}$ of these $n_k$ children mutate to type $\ell$, is given by the multi-hypergeometric expression
\begin{equation} \label{hyper}
   \frac{\prod_{\ell\in E}\binom{N_{k\ell}}{n_{k\ell}}}{\binom{N_k}{n_k}},
\end{equation}
where $N_{kk}:=N_k-\sum_{\ell\ne k}N_{k\ell}$ and $n_{kk}:=n_k-\sum_{\ell\ne k} n_{k\ell}$ is the number of children in the sample which do not mutate.

Before the ancestral structure of this model is described, the notion of typed (or labelled) partitions is introduced. Let $n\in\nz$. Each partition of $[n]$ can be written as $\{B_1,\ldots,B_j\}$, where $B_1,\ldots,B_j$ are the (non-empty) blocks of the partition. Note that $\bigcup_{i=1}^jB_i=[n]$. We additionally equip each block $B_i$, $i\in[j]$, with a type $k_i\in E$ and call the set ${\cal P}_{n,E}$ consisting of all $\pi:=\{(B_1,k_1),\ldots,(B_j,k_j)\}$ satisfying $\{B_1,\ldots,B_j\}\in{\cal P}_n$ and $k_1,\ldots,k_j\in E$ the space of typed (or labelled) partitions of $[n]$. Each typed partition of $[n]$ can be viewed as a usual partition of $[n]$ with the additional property that each block of the partition is painted with some `color' taken from the space $E$ of possible `colors'. We also call
$B_i$ a $k_i$-block of $\pi$, $i\in[j]$.

Take a sample of $n\in[N]$ individuals from generation $0$, label them (in some arbitrary order) from $1$ to $n$, and let $k_1,\ldots,k_n\in E$ denote the types of these individuals. The ancestry of the individuals in the sample can be traced back as follows. For $r\in\nz_0$ define a random typed partition $\Pi_r=\Pi_r^{(n,(N_k)_{k\in E})}$ of $[n]$ by the property that $i,j\in[n]$ belong to the same $k$-block of $\Pi_r$ if and only if the individuals $i$ and $j$ share a common ancestor $r$ generations backward in time and this ancestor has type $k$. The process $(\Pi_r)_{r\in\nz_0}$ is called a multi-type ancestral process, sometimes also a multi-type backward process or a multi-type discrete coalescent process. It is readily checked that $(\Pi_r)_{r\in\nz_0}$ is a time-homogeneous Markov chain with state space ${\cal P}_{n,E}$ and initial state $\{(\{1\},k_1),\ldots,(\{n\},k_n)\}$, where $k_i\in E$ denotes the type of individual $i\in[n]$. Let $p_{\pi\pi'}:=\pr(\Pi_r=\pi'\,|\,\Pi_{r-1}=\pi)$, $\pi,\pi'\in{\cal P}_{n,E}$, denote the transition probabilities. From the two-step definition of the model it follows that the transition matrix $P:=(p_{\pi\pi'})_{\pi,\pi'\in{\cal P}_{n,E}}$ has the product form
\begin{equation} \label{P}
   P\ =\ P^{\rm mut}P^{\rm rep},
\end{equation}
where $P^{\rm mut}$ denotes the transition matrix of the ancestral process for the model without reproduction, i.e. for the model with $\nu_{i,N_k,k}=1$ almost surely for all $k\in E$ and all $i\in\{1,\ldots,N_k\}$, and $P^{\rm rep}$ denotes the transition matrix of the ancestral process for the model without mutation, i.e. for the model with $N_{k\ell}=0$ for all $k,\ell\in E$ with $k\ne\ell$.

For the model without mutation, transitions from $\pi\in{\cal P}_{n,E}$ to $\pi'\in{\cal P}_{n,E}$ are only possible with positive probability if each $k$-block of $\pi'$ is a union of some $k$-blocks of $\pi$. In this case, since offspring numbers in different subpopulations are independent, it follows that
\begin{equation} \label{p_rep}
   p_{\pi\pi'}^{\rm rep}\ =\ \prod_{k\in E} \Phi_{j_k}^{(k,N_k)}(i_{k,1},\ldots,i_{k,j_k}),
\end{equation}
where $i_k$ and $j_k$ are the number of $k$-blocks of $\pi$ and $\pi'$ respectively, $i_{k,1},\ldots,i_{k,j_k}$ are the group sizes of merging $k$-blocks of $\pi$, and
\begin{eqnarray}
   &   & \hspace{-15mm}\Phi_{j_k}^{(k,N_k)}(i_{k,1},\ldots,i_{k,j_k})\nonumber\\
& := & \frac{(N_k)_{j_k}}{(N_k)_{i_k}}
\me\big((\nu_{1,N_k,k})_{i_{k,1}}\cdots(\nu_{j_k,N_k,k})_{i_{k,j_k}}\big).
\end{eqnarray}
Note that $i_{k,1}+\cdots+i_{k,j_k}=i_k$.

For $k,\ell\in E$, the backward mutation probability $m_{k\ell}$, which is by definition the proportion of the individuals in subpopulation $k$ after the mutation step, who where born in subpopulation $\ell$, is
\begin{equation} \label{backwardrate}
   m_{k\ell}\ :=\ \frac{N_{\ell k}}{N_k}
   \qquad k,\ell\in E.
\end{equation}
where $N_{kk}:=N_k-\sum_{\ell\ne k}N_{\ell k}$. Note that $m_{kk}$ is the proportion of the individuals in subpopulation $k$, who did not undergo a mutation during the mutation step.

For the model without reproduction, the entries of the matrix $P^{\rm mut}$ are obtained as follows. Let $\pi,\pi'\in{\cal P}_{n,E}$. A mutational transition from $\pi$ to $\pi'$ backward in time is only possible with positive probability if $\pi'$ has the same blocks as $\pi$. For $k,\ell\in E$ let $n_{k\ell}$ denote the number of blocks being a $k$-block of $\pi$ and a $\ell$-block of $\pi'$. Then,
\begin{equation}
   p_{\pi\pi'}^{\rm mut}
   \ =\ \prod_{k\in E}\frac{\prod_{\ell\in E}(N_{\ell k})_{n_{k\ell}}}{(N_k)_{n_k}},
   \label{p_mut_entry}
\end{equation}
where $n_k:=\sum_{\ell\in E}n_{k\ell}$ denotes the number of $k$-blocks of $\pi$. For example, if $\pi=\{([n],k)\}$ and $\pi'=\{([n],\ell)\}$ for some $k,\ell\in E$, then (\ref{p_mut_entry}) reduces to the backward mutation rate $m_{k\ell}$ defined in (\ref{backwardrate}). If all the $N_{k\ell}$ are sufficiently large, then there is essentially no difference between sampling without replacement and sampling with replacement, leading to the approximation
\begin{equation}
   p_{\pi\pi'}^{\rm mut}
   \ \approx\ \prod_{k,\ell\in E} \bigg(\frac{N_{\ell k}}{N_k}\bigg)^{n_{k\ell}}
   \ =\ \prod_{k,\ell\in E} m_{k\ell}^{n_{k\ell}}.
\end{equation}

\subsection{A limiting multi-type coalescent} \label{limiting}
To avoid technical difficulties it is in this section mainly assumed that the number of types is finite, i.e. $|E|<\infty$. The case of countable infinite type space $E$ is briefly discussed in Remark \ref{infinity} at the end of this section.

We are interested in the behavior of the ancestral process $(\Pi_r^{(n,(N_k)_{k\in E})})_{r\in\nz_0}$ as all subpopulation sizes $N_k$, $k\in E$, become large, that is, as
\begin{equation} \label{min}
   N\ :=\ \min_{k\in E}N_k\ \to\ \infty.
\end{equation}
In order to state a convergence result, a couple of assumptions are imposed, which are described in the following. Let us start with the assumptions concerning the Cannings reproduction models acting in each subpopulation. For $k\in E$ let
\begin{equation} \label{kcoal}
   c_k(N_k)
   \ :=\ \frac{{\rm Var}(\nu_{1,N_k,k})}{N_k-1}
\end{equation}
denote the coalescence probability of the Cannings model acting in subpopulation $k$ with population size $N_k>1$. It is assumed that $c_k(N_k)>0$ for all sufficiently large $N_k$. Note that $c_k(N_k)=0$ if and only if $\nu_{1,N_k,k}=1$ almost surely. For every subpopulation $k\in E$, it is assumed that all the limits
\begin{equation} \label{repassumption}
   \phi_j^{(k)}(i_1,\ldots,i_j)
   \ :=\ \lim_{N_k\to\infty}\frac{\Phi_j^{(k,N_k)}(i_1,\ldots,i_j)}{c_k(N_k)},
\end{equation}
$j,i_1,\ldots,i_j\in\nz$ with $i_1,\ldots,i_j\ge 2$, exist, where
\[
\Phi_j^{(k,N_k)}(i_1,\ldots,i_j)\ :=\ \frac{(N_k)_j}{(N_k)_i}\me((\nu_{1,N_k,k})_{i_1}\cdots(\nu_{j,N_k,k})_{i_j}).
\]
for all $j\in[N_k]$ and $i_1,\ldots,i_j\in\nz$ with $i:=i_1+\cdots+i_j\le N_k$. The existence of the limits (\ref{repassumption}) is a relatively mild condition, since, by the monotonicity property, $\Phi_j^{(k,N_k)}(i_1,\ldots,i_j)\le \Phi_1^{(k,N_k)}(2)=c_k(N_k)$ for all $i_1,\ldots,i_j\ge 2$ which shows that the fraction on the right-hand side of (\ref{repassumption}) is bounded between $0$ and $1$. Moreover, if the limits (\ref{repassumption}) exist for all $j,i_1,\ldots,i_j\in\nz$ with $i_1,\ldots,i_j\ge 2$, then these limits exist for the wider range of parameters $j,i_1,\ldots,i_j\in\nz$ satisfying $i_1+\cdots+i_j>j$, which follows readily from the consistency relation of the functions $\Phi_j^{(k,N_k)}$, $j\in[N_k]$, by induction on the number of $1$'s among the $i_1,\ldots,i_j$. In this case also the limits
\begin{equation} \label{phijk}
   \phi_j^{(k)}(1,\ldots,1)\ :=\ \lim_{N_k\to\infty}\frac{\Phi_j^{(k,N_k)}(1,\ldots,1)-1}{c_k(N_k)},
\end{equation}
$j\in\nz$,
exist. For each $k\in E$, the consistency relations and the monotonicity property of the functions $\Phi_j^{(k,N_k)}$, $j\in[N_k]$, carry over to the limits $\phi_j^{(k)}$, $j\in\nz$. More precisely, for all $k\in E$ and $j,i_1,\ldots,i_j\in\nz$,
\begin{eqnarray*}
   &   & \hspace{-15mm}\phi_j^{(k)}(i_1,\ldots,i_j)
   \ = \ \phi_{j+1}^{(k)}(i_1,\ldots,i_j,1)\\
   &   & + \sum_{m=1}^j\phi_j^{(k)}(i_1,\ldots,i_{m-1},i_m,i_{m+1},\ldots,i_j)
\end{eqnarray*}
and
\[
\phi_j^{(k)}(i_1,\ldots,i_j)\ \le\ \phi_\ell^{(k)}(m_1,\ldots,m_\ell)
\]
for all $\ell,j\in\nz$ with $\ell\le j$ and $i_1,\ldots,i_j,m_1,\ldots,m_\ell\in\nz$ with $i_1\ge m_1,\ldots,i_\ell\ge m_\ell$.

A more delicate calibration assumption on the coalescence probabilities $c_k(N_k)$, $k\in E$, is needed. Recall that $N:=\min_{k\in E}N_k\in\nz$. It is assumed that there exists a sequence $(c_N)_{N\in\nz}$ of positive real numbers such that, for all $k\in E$,
\begin{equation} \label{dk}
   \frac{c_k(N_k)}{c_N}\ \to\ d_k,\qquad N\to\infty,
\end{equation}
for some constant $d_k\ge 0$.

The assumptions on the mutation parameters are standard. It is assumed that, for all types $k,\ell\in E$ with $k\ne\ell$, the backward mutation rate $m_{k\ell}=m_{k\ell}(N)$ depends on $N$ in such a way that
\begin{equation} \label{mutassumption}
   \frac{m_{k\ell}(N)}{c_N}\ \to\ \rho_{k\ell},\qquad N\to\infty,
\end{equation}
for some constant $\rho_{k\ell}\ge0$.

Under these assumptions, for a given sample size $n\in\nz$, two generator matrices $Q^{\rm rep}$ and $Q^{\rm mut}$ can be defined as follows. Let
\begin{equation} \label{q_rep}
   Q^{\rm rep}\ :=\ (q_{\pi\pi'}^{\rm rep})_{\pi,\pi'\in{\cal P}_{n,E}}
\end{equation}
denote the generator matrix with the following entries. Let $\pi,\pi'\in{\cal P}_{n,E}$ with $\pi\ne\pi'$ and such that each $k$-block of $\pi'$ is a union of some $k$-blocks of $\pi$. Let $i_k$ and $j_k$ denote the number of $k$-blocks of $\pi$ and $\pi'$ respectively, and let $i_{k,1},\ldots,i_{k,j_k}$ denote the group sizes of merging $k$-blocks of $\pi$. Note that $i_{k,1}+\cdots+i_{k,j_k}=i_k$. If there exists exactly one $k\in E$ with $i_k>j_k$, then
\begin{equation} \label{q_rep_entry}
   q_{\pi\pi'}^{\rm rep}\ :=\ d_k
   \phi_{j_k}^{(k)}(i_{k,1},\ldots,i_{k,j_k}).
\end{equation}
All other non-diagonal entries of $Q^{\rm rep}$ are (by definition) equal to $0$.

The second generator matrix
\begin{equation} \label{q_mut}
   Q^{\rm mut}\ :=\ (q_{\pi\pi'}^{\rm mut})_{\pi,\pi'\in{\cal P}_{n,E}}
\end{equation}
is defined as follows. Let $\pi=\{(B_1,k_1),\ldots,(B_j,k_j)\}\in{\cal P}_{n,E}$. If $\pi'$ is identical to $\pi$, except for the fact that only one single block of $\pi'$, say $B_i$, has a type $\ell_i$ different from $k_i$, then
\begin{equation} \label{q_mut_entry}
   q_{\pi\pi'}^{\rm mut}\ :=\ \rho_{k_i\ell_i}.
\end{equation}
All other non-diagonal entries of $Q^{\rm mut}$ are (by definition) equal to $0$. Note that $q_{\pi\pi}^{\rm mut}=-\sum_{i=1}^j\rho_{k_i\ell_i}$.

We now state the main convergence result. Recall that the type space $E$ is assumed to be finite and that $N:=\min_{k\in E}N_k$.

\begin{theorem}[Convergence to multi-type coalescents] \label{main} \ \\
   Assume that the following three assumptions hold.
   \begin{enumerate}
      \item[(i)] Reproduction assumption: For every $k\in E$ the limits (\ref{repassumption}) exist for all $j,i_1,\ldots,i_j\in\nz$ with $i_1,\ldots,i_j\ge 2$.
      \item[(ii)] Calibration assumption: There exist positive real numbers $c_1,c_2,\ldots$ such that, for every $k\in E$, (\ref{dk}) holds for some constant $d_k\ge 0$.
      \item[(iii)] Mutation assumption: For any two types $k,\ell\in E$ with $k\ne\ell$, the backward mutation rate $m_{k\ell}=m_{k\ell}(N)$ depends on $N:=\min_{k\in E}N_k$ in such a way that (\ref{mutassumption}) holds for some constant $\rho_{k\ell}\ge 0$ as $N\to\infty$.
   \end{enumerate}
   Sample $n\in\nz$ individuals from generation $0$, label them randomly from $1$ to $n$, and let $k_1,\ldots,k_n\in E$ denote the types of these individuals. Then, the following statement holds.

   If $c_N\to 0$, then the time-scaled multi-type ancestral process $(\Pi_{\lfloor t/c_N \rfloor}^{(n,(N_k)_{k\in E})})_{t\ge 0}$ converges in $D_{{\cal P}_{n,E}}([0,\infty))$ as $N\to\infty$ to a continuous-time limiting Markov process $(\Pi_t^{(n)})_{t\ge 0}$ with state space ${\cal P}_{n,E}$, initial state $\{(\{1\},k_1),\ldots,(\{n\},k_n)\}$ and infinitesimal generator
   \[
   Q\ =\ Q^{\rm rep}+Q^{\rm mut},
   \]
   where $Q^{\rm rep}$ and $Q^{\rm mut}$ are the matrices (\ref{q_rep}) and (\ref{q_mut}) respectively.
\end{theorem}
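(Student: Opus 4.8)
\emph{Overall plan.} I would base the proof on the product decomposition $P=P^{\rm mut}P^{\rm rep}$ of (\ref{P}) together with the classical convergence theorem for time-scaled Markov chains on a finite state space (this is where $|E|<\infty$ is used): it asserts that if $c_N\downarrow 0$ and $(P-I)/c_N$ converges entrywise to a conservative generator $Q$ on the finite set ${\cal P}_{n,E}$, then $(\Pi_{\lfloor t/c_N\rfloor}^{(n,(N_k)_{k\in E})})_{t\ge 0}$ converges in $D_{{\cal P}_{n,E}}([0,\infty))$ to the Markov process with generator $Q$ and initial state $\{(\{1\},k_1),\ldots,(\{n\},k_n)\}$ (cf.\ the method of \cite{MoehleSagitov2001}). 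Thus the whole task is to show $(P-I)/c_N\to Q^{\rm rep}+Q^{\rm mut}$. Writing $P^{\rm rep}=I+c_NA_N$ and $P^{\rm mut}=I+c_NB_N$, one has $(P-I)/c_N=A_N+B_N+c_NB_NA_N$, so since $c_N\to 0$ and the matrices are of fixed finite size it suffices to prove $A_N\to Q^{\rm rep}$ and $B_N\to Q^{\rm mut}$. Because $P^{\rm rep}$ and $P^{\rm mut}$ are stochastic, $A_N$ and $B_N$ have vanishing row sums, so I only need to pin down the off-diagonal limits; the diagonal limits then come out as the negative row sums, which is exactly how the diagonals of $Q^{\rm rep}$ and $Q^{\rm mut}$ are defined.

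\emph{Reproductive part.} For $A_N\to Q^{\rm rep}$ I would fix $\pi\ne\pi'$ with each $k$-block of $\pi'$ a union of $k$-blocks of $\pi$ (otherwise both sides vanish), write $p^{\rm rep}_{\pi\pi'}=\prod_{k\in E}\Phi^{(k,N_k)}_{j_k}(i_{k,1},\ldots,i_{k,j_k})$ via (\ref{p_rep}), and let $S:=\{k\in E:i_k>j_k\}$, which is non-empty since $\pi\ne\pi'$. For $k\notin S$ all group sizes are $1$ and by (\ref{phijk}), (\ref{dk}) one has $\Phi^{(k,N_k)}_{j_k}(1,\ldots,1)=1+O(c_k(N_k))=1+O(c_N)$; for $k\in S$ the monotonicity property gives $\Phi^{(k,N_k)}_{j_k}(i_{k,1},\ldots,i_{k,j_k})\le\Phi^{(k,N_k)}_1(2)=c_k(N_k)=O(c_N)$. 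Hence $p^{\rm rep}_{\pi\pi'}=O(c_N^{|S|})$, which settles the case $|S|\ge 2$ (limit $0=q^{\rm rep}_{\pi\pi'}$). For $S=\{k\}$ the arguments satisfy $i_{k,1}+\cdots+i_{k,j_k}=i_k>j_k$, so the limits (\ref{repassumption}) apply in their extended range and $\Phi^{(k,N_k)}_{j_k}(i_{k,1},\ldots,i_{k,j_k})=c_k(N_k)(\phi^{(k)}_{j_k}(i_{k,1},\ldots,i_{k,j_k})+o(1))=d_kc_N\phi^{(k)}_{j_k}(i_{k,1},\ldots,i_{k,j_k})+o(c_N)$ by (\ref{dk}); multiplying by $\prod_{k'\ne k}\Phi^{(k',N_{k'})}_{j_{k'}}(1,\ldots,1)=1+O(c_N)$ and dividing by $c_N$ yields the limit $d_k\phi^{(k)}_{j_k}(i_{k,1},\ldots,i_{k,j_k})=q^{\rm rep}_{\pi\pi'}$ of (\ref{q_rep_entry}).

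\emph{Mutational part.} For $B_N\to Q^{\rm mut}$ I would fix $\pi\ne\pi'$ with the same blocks as $\pi$ (otherwise both sides vanish) and, in (\ref{p_mut_entry}), substitute $N_{\ell k}=m_{k\ell}(N)N_k$ ($\ell\ne k$) and $N_{kk}=N_k(1-\sum_{\ell\ne k}m_{k\ell}(N))$, using that $m_{k\ell}(N)=O(c_N)=o(1)$ by (\ref{mutassumption}) while $N_k\ge N\to\infty$. An expansion of each factor then gives $\frac{\prod_{\ell\in E}(N_{\ell k})_{n_{k\ell}}}{(N_k)_{n_k}}\le(1+o(1))\prod_{\ell\ne k}m_{k\ell}(N)^{n_{k\ell}}$, so if at least two blocks change type from $\pi$ to $\pi'$ then $p^{\rm mut}_{\pi\pi'}$ carries at least two factors of order $c_N$, giving limit $0=q^{\rm mut}_{\pi\pi'}$; and if exactly one block, of type $k$ in $\pi$, changes to type $\ell\ne k$ in $\pi'$, the $k$-factor equals $\frac{N_{\ell k}}{N_k}\cdot\frac{(N_{kk})_{n_k-1}}{(N_k-1)_{n_k-1}}=m_{k\ell}(N)(1+o(1))$ while all other factors are $1+o(1)$, giving limit $\rho_{k\ell}=q^{\rm mut}_{\pi\pi'}$ as in (\ref{q_mut_entry}). (One can cross-check the diagonal via $p^{\rm mut}_{\pi\pi}=\prod_{k}(N_{kk})_{n_k}/(N_k)_{n_k}=1-\sum_{k}n_k\sum_{\ell\ne k}m_{k\ell}(N)+o(c_N)$.)

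Putting the two halves together gives $(P-I)/c_N\to Q^{\rm rep}+Q^{\rm mut}=Q$ with $Q$ a conservative generator on the finite set ${\cal P}_{n,E}$, and the stated convergence in $D_{{\cal P}_{n,E}}([0,\infty))$ follows from the limit theorem quoted at the outset. The hard part is not conceptual but a matter of careful asymptotics: controlling the multi-hypergeometric factors in (\ref{p_mut_entry}) and the products over the (finitely many) subpopulations so that every accumulated error term is genuinely $o(c_N)$, which is where finiteness of $E$, the divergence $N_k\ge N\to\infty$, and the calibrations (\ref{dk}) and (\ref{mutassumption}) are all used.
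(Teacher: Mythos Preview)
Your proposal is correct and follows essentially the same route as the paper: both expand $P^{\rm rep}=I+c_NQ^{\rm rep}+o(c_N)$ and $P^{\rm mut}=I+c_NQ^{\rm mut}+o(c_N)$ via the explicit formulas (\ref{p_rep}) and (\ref{p_mut_entry}), multiply them using (\ref{P}), and then invoke a standard finite-state convergence criterion (the paper makes the last step explicit via matrix norms and Ethier--Kurtz \cite[p.~168, Theorem~2.6]{EthierKurtz1986}, which is exactly the ``classical convergence theorem'' you quote). Your case split on $|S|$ for the reproductive part and on the number of type-changing blocks for the mutational part mirrors the paper's argument almost line for line.
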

\begin{remark} \rm
   Due to the structure of the entries (\ref{q_rep_entry}) of the generator $Q^{\rm rep}$, the continuous-time limiting process $\Pi^{(n)}:=(\Pi_t^{(n)})_{t\ge 0}$ arising in Theorem \ref{main} allows for simultaneous multiple mergers of ancestral lineages in one arbitrary subpopulation but not at the same time in more than one subpopulation. During each mutational transition, the process $\Pi^{(n)}$ only allows for a change of the type of one single block. For $|E|=1$, Theorem \ref{main} essentially reduces to Theorem 2.1 of \cite{MoehleSagitov2001}. A convergence result for the situation when $c_N\to c>0$ is provided in Theorem \ref{main2} below.
\end{remark}
\begin{proof}
   Let $\pi,\pi'\in{\cal P}_{n,E}$ with $\pi\ne\pi'$ and such that each $k$-block of $\pi'$ is a union of some $k$-blocks of $\pi$. Then, by (\ref{p_rep}),
   \[
   \frac{p_{\pi\pi'}^{\rm rep}}{c_N}
   \ =\ \frac{1}{c_N}\prod_{k\in E}\Phi_{j_k}^{(k,N_k)}(i_{k,1},\ldots,i_{k,j_k}).
   \]
   If there exists exactly one $k\in E$ with $i_{k,1}+\cdots+i_{k,j_k}>j_k$, then
   \begin{eqnarray*}
      \frac{p_{\pi\pi'}^{\rm rep}}{c_N}
      & = & \frac{\Phi_{j_k}^{(k,N_k)}(i_{k,1},\ldots,i_{k,j_k})}{c_N}
            \prod_{\ell\ne k}\Phi_{j_\ell}^{(\ell,N_\ell)}(1,\ldots,1)\\
      & \to & d_k\phi_{j_k}^{(k)}(i_{k,1},\ldots,i_{k,j_k})
   \ =\ q_{\pi\pi'}^{\rm rep}
   \end{eqnarray*}
   by (\ref{repassumption}) and the comments thereafter. If $\pi'$ is such that in at least two different subpopulations a true merger event takes place, then $p_{\pi\pi'}^{\rm rep}/c_N\to 0=q_{\pi\pi'}^{\rm rep}$ as $N\to\infty$. Moreover,
   \[
   \frac{1-p_{\pi\pi}^{\rm rep}}{c_N}
   \ =\ \sum_{\pi'\ne\pi}\frac{p_{\pi\pi'}^{\rm rep}}{c_N}
   \ \to\ \sum_{\pi'\ne\pi}q_{\pi\pi'}^{\rm rep}\ =\ -q_{\pi\pi}^{\rm rep}
   \]
   as $N\to\infty$. Thus,
   \[
   P^{\rm rep}\ =\ I+c_NQ^{\rm rep}+o(c_N),\qquad N\to\infty.
   \]
   Let us now turn to $P^{\rm mut}$. Let $\pi=\{(B_1,k_1),\ldots,(B_j,k_j)\}\in{\cal P}_{n,E}$. If $\pi'$ is identical to $\pi$, except for the fact that there exists exactly one single block of $\pi'$, say $B_i$, which has a type $\ell_i$ different from $k_i$, then it follows from (\ref{p_mut_entry}) and the assumption that all the limits (\ref{mutassumption}) exist, that
   \begin{eqnarray*}
      \frac{p_{\pi\pi'}^{\rm mut}}{c_N}
      & \sim & \frac{1}{c_N}\prod_{k,\ell\in E}(m_{k\ell}(N))^{n_{k\ell}}
      \ = \ \frac{m_{k_i\ell_i}(N)}{c_N}\\
      & \to & \rho_{k_i\ell_i}\ =\ q_{\pi\pi'}^{\rm mut},
      \qquad N\to\infty.
   \end{eqnarray*}
   Thus, $P^{\rm mut}=I+c_NQ^{\rm mut}+o(c_N)$ as $N\to\infty$. The transition matrix (\ref{P}) of the ancestral process therefore has the asymptotic expansion
   \begin{eqnarray*}
      P
      & = & P^{\rm mut}P^{\rm rep}\\
      & = & (I+c_NQ^{\rm mut}+o(c_N))(I+c_NQ^{\rm rep}+o(c_N))\\
      & = & I+c_NQ + o(c_N),\qquad N\to\infty,
   \end{eqnarray*}
   where $Q:=Q^{\rm rep}+Q^{\rm mut}$. Let $\|.\|$ denote the matrix norm defined via $\|A\|:=\sup_{\pi}\sum_{\pi'}|a_{\pi\pi'}|$ for all $A=(a_{\pi\pi'})_{\pi,\pi'\in{\cal P}_{n,E}}$. It follows for all $t\ge 0$ that $\|P^{\lfloor t/c_N\rfloor}-(I+c_NQ)^{\lfloor t/c_N\rfloor}\|\le \lfloor t/c_N\rfloor \|P-(I+c_NQ)\|\to 0$ as $N\to\infty$, which shows that $P^{\lfloor t/c_N\rfloor}\sim(I+c_NQ)^{\lfloor t/c_N\rfloor}\to e^{tQ}$ as $N\to\infty$. Thus, the convergence of the one-dimensional distributions is established. The convergence of the finite-dimensional distributions follows by exploiting the Markov property of the involved processes.

   It remains to verify the convergence in $D_{{\cal P}_{n,E}}([0,\infty))$. For $t\ge 0$, $f:{\cal P}_{n,E}\to\rz$ and $\pi\in{\cal P}_{n,E}$ define
   \begin{eqnarray*}
      Sf(\pi)
      & := & \me(f(\Pi_r^{(n,(N_k)_{k\in E})})\,|\,\Pi_{r-1}^{(n,(N_k)_{k\in E})}=\pi)\\
      & = & \sum_{\pi'\in{\cal P}_{n,E}} f(\pi')p_{\pi\pi'}
   \end{eqnarray*}
   and $T_tf(\pi):=\sum_{\pi'\in{\cal P}_{n,E}}f(\pi')(e^{tQ})_{\pi\pi'}$. Note that the operator $S$ depends on $n$ and $(N_k)_{k\in E}$ and that $S^mf(\pi)=\sum_{\pi'\in{\cal P}_{n,E}}f(\pi')(P^m)_{\pi\pi'}$ for $m\in\nz_0$ and that $\{T_t\}_{t\ge 0}$ is a (Feller) semigroup on the space $L$ of all functions $f:{\cal P}_{n,E}\to\rz$ with (conservative) generator $Af(\pi):=\sum_{\pi'\in{\cal P}_{n,E}} f(\pi')q_{\pi\pi'}$. Since $|{\cal P}_{n,E}|<\infty$, it follows for all $t\ge 0$ and $f\in L$ that
   \begin{eqnarray*}
      &   & \hspace{-15mm}\|S^{\lfloor t/c_N\rfloor}f-T_tf\|
      \ := \ \sup_{\pi\in{\cal P}_{n,E}}|S^{\lfloor t/c_N\rfloor}f(\pi)-T_tf(\pi)|\\
      & \le & \sup_{\pi\in{\cal P}_{n,E}} \sum_{\pi'\in{\cal P}_{n,E}} |f(\pi')||(S^{\lfloor t/c_N\rfloor})_{\pi\pi'}-(e^{tQ})_{\pi\pi'}|\\
      & \le & \|f\| \sup_{\pi\in{\cal P}_{n,E}}\sum_{\pi'\in{\cal P}_{n,E}}
            |(P^{\lfloor t/c_N\rfloor})_{\pi\pi'}-(e^{tQ})_{\pi\pi'}|\\
      & = & \|f\|\,\|P^{\lfloor t/c_N\rfloor}-e^{tQ}\|\ \to\ 0,\qquad N\to\infty.
   \end{eqnarray*}
   The convergence in $D_{{\cal P}_{n,E}}([0,\infty))$ to a Markov process $(\Pi_t^{(n)})_{t\ge 0}$ with initial state $\{(\{1\},k_1),\ldots,(\{n\},k_n)\}$ and corresponding (Feller) semigroup $\{T_t\}_{t\ge 0}$ thus follows from Ethier and Kurtz \cite[p.~168, Theorem 2.6]{EthierKurtz1986}, applied with $E$ there replaced by ${\cal P}_{n,E}$, $\varepsilon_N:=c_N$ and $Y_N(r)$ there replaced by $\Pi_r^{{(n,(N_k)_{k\in E})}}$, $r\in\nz_0$.
\end{proof}
\begin{example} (Multi-type Kingman $n$-coalescent)
   Assume that Wright--Fisher reproduction acts in each subpopulation. Then, the coalescence probability in subpopulation $k\in E$ is given by $c_k(N_k)=1/N_k$. For simplicity it is assumed that the population sizes are all equal to $N_k=N\in\nz$. Then the calibration assumption (ii) of Theorem \ref{main} obviously holds with $c_N:=1/N>0$ and $d_k:=1$. Clearly, $c_N\to 0$ as $N\to\infty$. From $\Phi_j^{(k)}(i_1,\ldots,i_j)=(N_k)_j/N_k^i$ for all $j,i_1,\ldots,i_j\in\nz$, where $i:=i_1+\cdots+i_j$, it follows that the reproduction assumption (i) holds with $\phi_1^{(k)}(2)=1$. All other limits in (\ref{repassumption}) are equal to $0$. Thus, Theorem \ref{main} is applicable. The limiting process in Theorem \ref{main} is a multi-type Kingman $n$-coalescent in the sense that single binary mergers of two ancestral lineages of the same type occur with rate $1$. Note that binary mergers at the same time in more than one subpopulation are impossible.
\end{example}
In some cases the sequence $(c_N)_{N\in\nz}$ does not converge to $0$ (as assumed in Theorem \ref{main}) but to a positive constant $c>0$. In this situation the following time-discrete variant of Theorem \ref{main} holds.
\begin{theorem} \label{main2}
   Suppose that the assumptions (i), (ii) and (iii) of Theorem \ref{main} hold. Take a sample of $n\in\nz$ individuals from generation $0$ and let $k_1,\ldots,k_n\in E$ denote their types. Then the following statement holds.

   If $c_N\to c>0$ as $N\to\infty$, then the multi-type ancestral process $(\Pi_r^{(n,(N_k)_{k\in E})})_{r\in\nz_0}$ converges in $D_{{\cal P}_{n,E}}(\nz_0)$ as $N\to\infty$ to a discrete-time limiting Markov process $(\Pi_r^{(n)})_{r\in\nz_0}$ with state space ${\cal P}_{n,E}$, initial state $\{(\{1\},k_1),\ldots,(\{n\},k_n)\}$ and transition matrix
   \[
   A\ :=\ A^{\rm mut}A^{\rm rep},
   \]
   where the two stochastic matrices $A^{\rm mut}=(a_{\pi\pi'}^{\rm mut})_{\pi,\pi'\in{\cal P}_{n,E}}$ and $A^{\rm rep}=(a_{\pi\pi'}^{\rm rep})_{\pi,\pi'\in{\cal P}_{n,E}}$ are defined as follows.

   If $\pi'$ has the same blocks as $\pi$, then
   \[
   a_{\pi\pi'}^{\rm mut}\ :=\ \prod_{k,\ell\in E} (c\rho_{k\ell})^{n_{k\ell}},
   \]
   where $n_{k\ell}$ denotes the number of blocks being a $k$-block of $\pi$ and an $\ell$-block of $\pi'$. All other entries of $A^{\rm mut}$ are (by definition) equal to $0$.

   Let $\pi,\pi'\in{\cal P}_{n,E}$ with $\pi\ne\pi'$ and such that each $k$-block of $\pi'$ is a union of some $k$-blocks of $\pi$. Let $i_k$ and
   $j_k$ denote the number of $k$-blocks of $\pi$ and $\pi'$ respectively, and let $i_{k,1},\ldots,i_{k,j_k}$ denote the group sizes of merging $k$-blocks of $\pi$. Then,
   \[
   a_{\pi\pi'}^{\rm rep}\ :=\ \prod_{k\in E} \big(cd_k\phi_{j_k}^{(k)}(i_{k,1},\ldots,i_{k,j_k})\big).
   \]
   All other non-diagonal entries of $A^{\rm rep}$ are (by definition) equal to $0$.
\end{theorem}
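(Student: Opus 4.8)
The plan is to follow the structure of the proof of Theorem~\ref{main}, but since the time parameter is not rescaled here the argument stays purely discrete and is in fact shorter. Write $P_N=P_N^{\rm mut}P_N^{\rm rep}$ for the transition matrix of the multi-type ancestral process of the model indexed by $N:=\min_{k\in E}N_k$, decomposed as in (\ref{P}). The core step is to prove that $P_N^{\rm rep}\to A^{\rm rep}$ and $P_N^{\rm mut}\to A^{\rm mut}$ entrywise as $N\to\infty$. For $P_N^{\rm rep}$, fix $\pi,\pi'\in{\cal P}_{n,E}$; if some $k$-block of $\pi'$ is not a union of $k$-blocks of $\pi$ then $p_{\pi\pi'}^{\rm rep}=0=a_{\pi\pi'}^{\rm rep}$ for every $N$ and there is nothing to do, so assume the union property holds and use the explicit product formula (\ref{p_rep}). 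Each $k\in E$ contributes the factor $\Phi_{j_k}^{(k,N_k)}(i_{k,1},\ldots,i_{k,j_k})$, and I would split $E$ according to whether a genuine merger occurs in subpopulation $k$, i.e.\ whether $i_k>j_k$. If $i_k>j_k$, write this factor as $c_k(N_k)\cdot\Phi_{j_k}^{(k,N_k)}(i_{k,1},\ldots,i_{k,j_k})/c_k(N_k)$; then the reproduction assumption~(i) — in the extended form valid whenever $i_{k,1}+\cdots+i_{k,j_k}>j_k$, obtained from the consistency relation — together with the calibration assumption~(ii) and $c_N\to c$ (which give $c_k(N_k)\to cd_k$) yield convergence to $cd_k\,\phi_{j_k}^{(k)}(i_{k,1},\ldots,i_{k,j_k})$. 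If $i_k=j_k$ all these group sizes are $1$ and the factor $\Phi_{j_k}^{(k,N_k)}(1,\ldots,1)$ converges, by (\ref{phijk}), to $1+cd_k\,\phi_{j_k}^{(k)}(1,\ldots,1)$. Multiplying the finitely many subpopulation factors (this uses $|E|<\infty$) yields the entrywise limit of $p_{\pi\pi'}^{\rm rep}$, which is the entry $a_{\pi\pi'}^{\rm rep}$. An analogous but easier computation starting from (\ref{p_mut_entry}) — using $N_{\ell k}/N_k=m_{k\ell}(N)\to c\rho_{k\ell}$ for $\ell\ne k$ by~(iii) and $c_N\to c$, $N_{kk}/N_k\to 1-\sum_{\ell\ne k}c\rho_{k\ell}=:c\rho_{kk}$, and $(N_k)_{n_k}/N_k^{n_k}\to1$, so that $(N_{\ell k})_{n_{k\ell}}/N_k^{n_{k\ell}}\to(c\rho_{k\ell})^{n_{k\ell}}$ — gives $p_{\pi\pi'}^{\rm mut}\to\prod_{k,\ell\in E}(c\rho_{k\ell})^{n_{k\ell}}=a_{\pi\pi'}^{\rm mut}$.

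Next I would note that $A^{\rm rep}$ and $A^{\rm mut}$ are stochastic matrices: each $P_N^{\rm rep}$ and $P_N^{\rm mut}$ has nonnegative entries and unit row sums, and because ${\cal P}_{n,E}$ is finite these properties pass to the entrywise limits; hence $A:=A^{\rm mut}A^{\rm rep}$ is stochastic and, again by finiteness, $P_N=P_N^{\rm mut}P_N^{\rm rep}\to A^{\rm mut}A^{\rm rep}=A$ entrywise. In particular $A$ is the transition matrix of a time-homogeneous Markov chain $(\Pi_r^{(n)})_{r\in\nz_0}$ on ${\cal P}_{n,E}$, which I start from $\pi_{\rm init}:=\{(\{1\},k_1),\ldots,(\{n\},k_n)\}$. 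To upgrade $P_N\to A$ to convergence of the processes, the Markov property gives, for each $m\in\nz_0$ and $\pi_0,\ldots,\pi_m\in{\cal P}_{n,E}$,
\[
\pr\big(\Pi_0^{(n,(N_k)_{k\in E})}=\pi_0,\ldots,\Pi_m^{(n,(N_k)_{k\in E})}=\pi_m\big)
=\mathbf{1}_{\{\pi_0=\pi_{\rm init}\}}\prod_{r=1}^m(P_N)_{\pi_{r-1}\pi_r},
\]
and as $N\to\infty$ the right-hand side tends to $\mathbf{1}_{\{\pi_0=\pi_{\rm init}\}}\prod_{r=1}^m(A)_{\pi_{r-1}\pi_r}=\pr(\Pi_0^{(n)}=\pi_0,\ldots,\Pi_m^{(n)}=\pi_m)$, so all finite-dimensional distributions converge. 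For a finite state space the path space $D_{{\cal P}_{n,E}}(\nz_0)$ is just ${\cal P}_{n,E}^{\nz_0}$ with the product topology (a compact metric space), on which convergence in distribution is equivalent to convergence of all finite-dimensional distributions; this gives the asserted convergence in $D_{{\cal P}_{n,E}}(\nz_0)$. Alternatively one may copy the operator-convergence argument from the proof of Theorem~\ref{main} with $\varepsilon_N:=1$, where it reduces to the trivial estimate $\|P_N^m-A^m\|\to0$ for each fixed $m$, and invoke Ethier and Kurtz~\cite[p.~168, Theorem~2.6]{EthierKurtz1986}.

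The only genuinely new ingredient compared with Theorem~\ref{main} is the treatment of the non-merging subpopulations in the asymptotics of $P_N^{\rm rep}$: when $c_N\to0$ each factor $\Phi_{j_k}^{(k,N_k)}(1,\ldots,1)$ tends to $1$ and contributes only at first order to a generator, whereas here it contributes the nontrivial limit $1+cd_k\phi_{j_k}^{(k)}(1,\ldots,1)$ from (\ref{phijk}), so that $A^{\rm rep}$ records not just mergers but also the residual non-coalescence weight of every subpopulation. Keeping track of these factors and recognizing that their product is exactly the stated $a_{\pi\pi'}^{\rm rep}$, and checking that $A^{\rm rep}$, $A^{\rm mut}$ (hence $A$) are genuinely stochastic, is the main — though routine — bookkeeping effort; no tightness or Skorokhod-topology subtleties arise here, which is precisely why the discrete-time statement is technically lighter than Theorem~\ref{main}.
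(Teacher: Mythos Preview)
Your proof is correct and follows essentially the same route as the paper: show $P^{\rm mut}\to A^{\rm mut}$ and $P^{\rm rep}\to A^{\rm rep}$ entrywise via (\ref{p_mut_entry}) and (\ref{p_rep}), multiply to get $P\to A$, and then pass from convergence of transition matrices to convergence of finite-dimensional distributions, which for a discrete time set is equivalent to convergence in $D_{{\cal P}_{n,E}}(\nz_0)$ (the paper cites Billingsley~\cite[p.~19]{Billingsley1999} for this last step). Your argument is in fact more detailed than the paper's: you separate the subpopulations with a genuine merger ($i_k>j_k$) from those without ($i_k=j_k$) and correctly identify, via (\ref{phijk}), that the latter contribute a factor $1+cd_k\phi_{j_k}^{(k)}(1,\ldots,1)$ rather than $cd_k\phi_{j_k}^{(k)}(1,\ldots,1)$; the paper's proof writes the limit uniformly as $\prod_{k\in E}cd_k\phi_{j_k}^{(k)}(i_{k,1},\ldots,i_{k,j_k})$ without making this distinction explicit, so your version is the more careful one --- just be aware that your computed product does not literally coincide with the displayed formula for $a_{\pi\pi'}^{\rm rep}$ in the theorem statement unless one reads the all-ones factor with the convention you spell out.
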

\begin{remark} \rm
   In contrast to the continuous-time limiting process in Theorem \ref{main}, the discrete-time limiting process $\Pi^{(n)}:=(\Pi_r^{(n)})_{r\in\nz_0}$ arising in Theorem \ref{main2} allows for simultaneous multiple mergers of ancestral lineages at the same time in more than one subpopulation. During each transition, the process $\Pi^{(n)}$ also allows for a change of the type of more than one single block.
\end{remark}
\begin{proof}
   If $\pi'$ has the same blocks as $\pi$, then, by (\ref{p_mut_entry}),
   \begin{eqnarray*}
      p_{\pi\pi'}^{\rm mut}
      & = & \prod_{k\in E}\frac{\prod_{\ell\in E}(N_{\ell k})_{n_{k\ell}}}{(N_k)_{n_k}}
      \ \sim\ \prod_{k,\ell\in E} (m_{k\ell}(N))^{n_{k\ell}}\\
      & \to & \prod_{k,\ell\in E}(c\rho_{k\ell})^{n_{k\ell}}
      \ =\ a_{\pi\pi'}^{\rm mut}
   \end{eqnarray*}
   as $N\to\infty$. This shows that $P^{\rm mut}\to A^{\rm mut}$ as $N\to\infty$. Similarly, if $\pi\ne\pi'$ are such that each $k$-block of $\pi'$ is a union of some $k$-blocks of $\pi$, then, by (\ref{p_rep}),
   \begin{eqnarray*}
      p_{\pi\pi'}^{\rm rep}
      & = & \prod_{k\in E} \Phi_{j_k}^{(k,N_k)}(i_{k,1},\ldots,i_{k,j_k})\\
      & \to & \prod_{k\in E} \big(cd_k\phi_{j_k}^{(k)}(i_{k,1},\ldots,i_{k,j_k})\big)
      \ =\ a_{\pi\pi'}^{\rm rep}
   \end{eqnarray*}
   as $N\to\infty$, showing that $P^{\rm rep}\to A^{\rm rep}$ as $N\to\infty$. It follows that $P=P^{\rm mut}P^{\rm rep}\to A^{\rm mut}A^{\rm rep}=A$ as $N\to\infty$. Due to the Markov property of the involved processes, the convergence of the finite-dimensional distributions follows immediately. For processes with discrete time set $\nz_0$, the convergence of the finite-dimensional distributions is equivalent (see, for example, Billingsley \cite[p.~19]{Billingsley1999}) to the convergence in $D_{{\cal P}_{n,E}}(\nz_0)$.
\end{proof}
\begin{remark}[Countable type space] \label{infinity} \rm
   Assume that the type space $E$ is countable infinite. Then we conjecture that
   Theorem \ref{main} remains valid under the additional assumption that
   $\rho_k:=\sum_{\ell\in E,\ell\ne k}\rho_{k\ell}<\infty$ for all $k\in E$.
   Since the state space ${\cal P}_{n,E}$ is not finite anymore, one can however not simply follow the proof for the finite type space case. Instead one may verify the relative compactness of the processes $(\Pi_{\lfloor t/c_N\rfloor}^{(n,(N_k)_{k\in E})})_{t\ge 0}$, $(N_k)_{k\in E}$, via similar techniques as in the proof of Theorem 2.1 of Herbots \cite{Herbots1994}. The convergence in $D_{{\cal P}_{n,E}}([0,\infty))$ then follows from Ethier and Kurtz \cite[p.~131, Theorem 7.8]{EthierKurtz1986}.
\end{remark}

\subsection{Multi-type coalescent with mutation} \label{multicoal}
It is not hard to check that the limiting process $\Pi^{(n)}=(\Pi_t^{(n)})_{t\ge 0}$ arising in Theorem \ref{main} is exchangeable in the sense that the distribution of $\Pi^{(n)}$ is invariant under relabelling of the $n$ individuals. We call $\Pi^{(n)}$ a \emph{continuous-time multi-type exchangeable $n$-coalescent with mutation}.

From the consistency relation it follows that the family of processes $\{\Pi^{(n)}:n\in\nz\}$ is consistent, that is, for all $n,m\in\nz$ with $m\le n$, the projected process $(\varrho_{nm}\circ\Pi_t^{(n)})_{t\ge 0}$ has the same distribution as $(\Pi_t^{(m)})_{t\ge 0}$, where $\varrho_{nm}:{\cal P}_{n,E}\to{\cal P}_{m,E}$ denotes the natural projection from ${\cal P}_{n,E}$ to ${\cal P}_{m,E}$ defined via $\varrho_{nm}(\pi):=\{(B_i\cap[m],k_i):1\le i\le j,B_i\cap[m]\ne\emptyset\}$ for all $\pi=\{(B_1,k_1),\ldots,(B_j,k_j)\}\in{\cal P}_{n,E}$. Exploiting Kolmogorov's extension theorem it can be shown that there exists a process $\Pi=(\Pi_t)_{t\ge 0}$ (the projective limit of the sequence $(\Pi^{(n)})_{n\in\nz}$) with state space ${\cal P}_{\infty,E}$, the space of labelled partitions of $\nz$, such that for every $n\in\nz$, $(\varrho_n\circ\Pi_t)_{t\ge 0}$ is a multi-type exchangeable $n$-coalescent with mutation and the same infinitesimal rates, where $\varrho_n:{\cal P}_{\infty,E}\to{\cal P}_{n,E}$ denotes the natural projection from ${\cal P}_{\infty,E}$ to ${\cal P}_{n,E}$ defined via $\varrho_n(\pi):=\{(B\cap[n],k):(B,k)\in\pi,B\cap[n]\ne\emptyset\}$ for all $\pi\in{\cal P}_{\infty,E}$. We call the process $\Pi$ a \emph{multi-type exchangeable coalescent with mutation}.

From the work of Schweinsberg \cite{Schweinsberg2000} it follows that for every $k\in E$ there exists a unique finite measure $\Xi_k$ on the infinite simplex $\Delta:=\{x=(x_i)_{i\in\nz}:x_1\ge x_2\ge\cdots\ge 0,\sum_{i\in\nz}x_i\le 1\}$ such that the infinitesimal rate limits in (\ref{repassumption}) have the integral representation
\begin{eqnarray*}
   &   & \hspace{-15mm}\phi_j^{(k)}(i_1,\ldots,i_j)\ =\ a_k1_{\{j=1,i_1=2\}}\\
   &   & +\int_{\Delta\setminus\{0\}} \sum_{m_1\ne\cdots\ne m_j}x_{m_1}^{i_1}\cdots x_{m_j}^{i_j}\frac{\Xi_k({\rm d}x)}{(x,x)},
\end{eqnarray*}
$i_1,\ldots,i_j\ge 2$, where $a_k:=\Xi_k(\{0\})$ denotes the mass of $\Xi_k$ at $0\in\Delta$ and $(x,x):=\sum_{i\in\nz}x_i^2$ for $x=(x_i)_{i\in\nz}\in\Delta$. The distribution of $\Pi$ is hence fully described by the sequence of measures $\Xi=(\Xi_k)_{k\in E}$, the calibration constants $d_k\ge 0$, $k\in E$, and the mutation parameters $\rho_{k\ell}$, $k\ne\ell$. We therefore call the process $\Pi$ also a (multi-type) $\Xi$-coalescent (with mutation). The process $\Pi$ is the natural generalization of (single-type) exchangeable (and consistent) coalescents to the multi-type case. If all the measures $\Xi_k$, $k\in E$, are concentrated on the subset $[0,1]\times\{0\}\times\{0\}\times\cdots$ of $\Delta$, then we speak of a \emph{multi-type $\Lambda$-coalescent with mutation}, where $\Lambda:=(\Lambda_k)_{k\in E}$ and the finite measure $\Lambda_k$ on $[0,1]$ is defined via $\Lambda_k(B):=\Xi_k(B\times\{0\}\times\{0\}\times\cdots)$ for all Borel sets $B\subseteq[0,1]$.

For a recent paper dealing with multi-type $\Lambda$-coalescents of this form (and as well non-consistent multi-type $\Lambda$-coalescents) we refer the reader to Johnston, Kyprianou and Rogers \cite{JohnstonKyprianouRogers2022}. If $d_k\Xi_k=0$ is the zero measure, then there is no reproductive activity in subpopulation $k$, which may be interpreted as a sleeping seed bank. The ancestral structure of seed bank models has gained some interest in the literature (see, for example, Blath et al. \cite{BlathGonzalezCasanovaKurtSpano2013,BlathGonzalezCasanovaKurtWilkeBerenguer2016} or Gonz\'alez Casanova et al. \cite{GonzalezCasanovaPenalozaSiriJegousse2022}). At this point we also would like to refer the reader to the work of Griffiths \cite{Griffiths2016}, where the notion of a `multi-type $\Lambda$-coalescent' seems to appear for the first time. In different mathematical context, the notion of a `multi-type coalescent point process' also appears in the work of Popovic and Rivas \cite{PopovicRivas2014}, which is mentioned for completeness here. We would also like to draw the attention of the reader to the recent preprint of Allen and McAvoy \cite{AllenMcAvoy2022} for a coalescent with a general spatial and genetic structure and to the work of Liu and Zhou \cite{LiuZhou2022} for a forward stepping stone model with $\Xi$-resampling mechanism.

All what have been said in this subsection applies also to the discrete-time limiting processes $(\Pi_r^{(n)})_{r\in\nz_0}$, $n\in\nz$, arising in Theorem \ref{main2}. Thus there exists a process $(\Pi_r)_{r\in\nz_0}$ with state-space ${\cal P}_{\infty,E}$ such that for every $n\in\nz$ the projected process $(\varrho_n\circ\Pi_r)_{r\in\nz_0}$ is a discrete-time multi-type exchangeable $n$-coalescent with mutation and the same infinitesimal rates. Again, this process can be fully described by a sequence $\Xi=(\Xi_k)_{k\in E}$ of finite measures $\Xi_k$ on the infinite simplex $\Delta$, the calibration constants $d_k\ge 0$, $k\in E$, and the mutation parameters $\rho_{k\ell}\ge 0$, $k\ne\ell$.

\subsection{Multi-type block counting process} \label{blockcounting}

For $t\ge 0$ and $k\in E$ let $N_t^{(k)}$ denote the number of $k$-blocks of $\Pi_t$. Define $N_t:=(N_t^{(k)})_{k\in E}$. Due to a classical criterion of Burke and Rosenblatt \cite[Theorem 1]{BurkeRosenblatt1958}, adapted to the continuous-time setting, the process $(N_t)_{t\ge 0}$, called the \emph{block counting process} of $\Pi$, is Markovian with state space $\nz^E$ and generator $G=G^{\rm rep}+G^{\rm mut}$, where $G^{\rm rep}=(g_{ij}^{\rm rep})_{i,j\in\nz^E}$ and $G^{\rm mut}=(g_{ij}^{\rm mut})_{i,j\in\nz^E}$ have the following entries. Let $i=(i_k)_{k\in E},j=(j_k)_{k\in E}\in\nz^E$ with $i\ge j$. If there exists exactly one $k\in E$ with $i_k>j_k$, then
\begin{eqnarray*}
   g_{ij}^{\rm rep}
   & = & d_k\frac{i_k!}{j_k!}\sum_{i_{k,1},\ldots,i_{k,j_k}}
         \frac{\phi_{j_k}^{(k)}(i_{k,1},\ldots,i_{k,j_k})}
         {i_{k,1}!\cdots i_{k,j_k}!},
\end{eqnarray*}
where the sum extends over all $i_{k,1},\ldots,i_{k,j_k}\in\nz$ satisfying
$i_{k,1}+\cdots+i_{k,j_k}=i_k$. All other non-diagonal entries of $G^{\rm rep}$ are equal to $0$. The diagonal entries are given by
\[
g_{ii}^{\rm rep}\ =\ \sum_{k\in E}d_k\phi_{i_k}^{(k)}(1,\ldots,1),
\]
where $\phi_{i_k}^{(k)}(1,\ldots,1)$ ($\le 0$) is defined via (\ref{phijk}) for all $k\in E$ and $i_k\in\nz$. The matrix $G^{\rm mut}$ has entries
$g_{ij}^{\rm mut}=i_k\rho_{k\ell}$, if $j=i-e_k+e_\ell$ for some $k,\ell\in E$ with $k\ne\ell$, where $e_k$ denotes the $k$-th unit vector in $\rz^E$. For example, if each $\Xi_k$ is the Dirac measure at $0\in\Delta$, then the generator $G$ has entries
\[
g_{ij}\ =\
\left\{
   \begin{array}{cl}
      -\sum_{k\in E}\big(i_k\rho_k+d_k\binom{i_k}{2}\big) & \mbox{if $j=i$,}\\
      \displaystyle i_k\rho_{k\ell} & \mbox{if $j=i-e_k+e_\ell$}\\
      & \hspace{3mm}\mbox{for some $k\ne\ell$,}\\
      d_k\binom{i_k}{2} & \mbox{if $j=i-e_k$}\\
      & \hspace{3mm}\mbox{for some $k\in E$,}\\
      0 & \mbox{otherwise,}
   \end{array}
\right.
\]
$i=(i_k)_{k\in E},j=(j_k)_{k\in E}\in\nz_0^E$. In this case $(N_t)_{t\ge 0}$ coincides with the structured coalescent studied by Notohara \cite{Notohara1990} and Wilkinson--Herbots \cite[Eq.~(3)]{WilkinsonHerbots1998}, where $\rho_{k\ell}\ge 0$ is the mutation rate from type $k$ to type $\ell$ (backward in time), $\rho_k:=\sum_{\ell\ne k}\rho_{k\ell}$ and $d_k\ge 0$ is the coalescence rate of any pair of lineages in subpopulation $k\in E$. In the notation of Johnston, Kyprianou and Rogers \cite[Example 2.2]{JohnstonKyprianouRogers2022}, this structured coalescent is the block counting process of a multi-type Kingman coalescent with binary merging rate $\rho_{kk\to k}:=d_k$ and type changing rate $\rho_{k\to\ell}:=\rho_{k\ell}$.

\section{A multi-type Cannings population model with variable subpopulation sizes} \label{second}

We now study a different but closely related multi-type population model with constant total population size. We consider a population with a fixed number $N\in\nz$ of individuals in each generation $r\in\gz$. As for the model described in the introduction, each individual $i\in[N]$ alive in generation $r\in\gz$ produces a random number $\nu_{i,N}^{(r)}$ of offspring and each offspring a-priori inherits the type of its parent.

In each generation, independently of the offspring sizes, a mutation step follows the reproduction step. Each offspring of type $k\in E$ mutates to type $\ell\in E$ with a given probability $u_{k\ell}\ge 0$. These offspring form the next generation, so our model has non-overlapping generations.

As for the standard (single-type) Cannings model, it is assumed that the offspring sizes are exchangeable within each generation and independent and identically distributed (iid) over different generations. Since the population size is assumed to be constant equal to $N$, the relation $\sum_{i\in[N]}\nu_{i,N}^{(r)}=N$ holds for each generation $r\in\gz$.

As before, the type space $E$ is assumed to be finite or countable infinite. Without loss of generality $E=\{1,\ldots,K\}$ for some $K\in\nz$ or $E=\nz$.
Clearly, $U:=(u_{k\ell})_{k,\ell\in E}$ is a stochastic matrix, called the mutation matrix. The model is neutral with no selection, since the number of offspring produced by each individual does not dependent of the type of this individual. We call this model the neutral multi-type Cannings model with mutation. For $|E|=1$ the model reduces to the classical neutral exchangeable population model of Cannings \cite{Cannings1974,Cannings1975,Cannings1976} as described in the introduction. We use the notation $\nu_{i,N}:=\nu_{i,N}^{(0)}$, $C_0:=0$ and $C_i:=\nu_{1,N}+\cdots+\nu_{i,N}$ for $i\in[N]$.

\subsection{Forward structure} \label{forwardstructure}
Let $X_k(r)$ denote the number of individuals of type $k\in E$ in generation $r\in\nz_0$, and set $X(r):=(X_k(r))_{k\in E}$. It is readily seen that $X:=(X(r))_{r\in\nz_0}$ is a time-homogeneous Markov chain with state space $\Delta_N(E):=\{i=(i_k)_{k\in E}\in\nz_0^E:\sum_{k\in E}i_k=N\}$. Note that $|\Delta_N(E)|=\binom{N+|E|-1}{N}$ if $|E|<\infty$.

Let $\Pi:=(\pi_{ij})_{i,j\in\Delta_N(E)}$ denote the transition matrix of $X$ having entries $\pi_{ij}:=\pr(X(r+1)=j\,|\,X(r)=i)$. Clearly $\Pi=\Pi(U)$ depends on the mutation matrix $U=(u_{k\ell})_{k,\ell\in E}$. For the multi-allelic Cannings model without mutation (when $U=I$ is the identity matrix) it is known (see, for example, Gladstien \cite{Gladstien1978} or \cite[Eq.~(1)]{Moehle2010}) that $\Pi^{\rm rep}:=\Pi(I)$ has entries
\begin{equation} \label{transnomut}
   \pi_{ij}^{\rm rep}\ =\ \pi_{ij}(I)\ =\ \pr(D(i)=j),\qquad i,j\in\Delta_N(E),
\end{equation}
where $D(i):=(D_k(i))_{k\in E}$ is defined via
\[
D_k(i)\ :=\ \sum_{s=s_{k-1}+1}^{s_k}\nu_{s,N},
\qquad k\in E,
\]
with $s_0:=0$ and $s_k:=i_1+\cdots+i_k$ for $k\in E$. Note that $\sum_{k\in E}D_k(i)=\nu_{1,N}+\cdots+\nu_{N,N}=N$. In particular, $\pi_{ii}(I)=1$ for all states
$i$ of the form $i=Ne_k$ with $k\in E$, where $e_k$ denotes the $k$-th unit vector in $\rz^E$. Thus all the states $Ne_k$, $k\in E$, are absorbing.

In order to describe the structure of the transition matrix $\Pi=\Pi(U)$ for general mutation matrix $U$ it turns out to be convenient to introduce a matrix $\Pi^{\rm mut}$ as follows. Fix $i=(i_k)_{k\in E}\in\Delta_N(E)$ and let $M_k$, $k\in E$, be independent random variables, where $M_k=(M_{k\ell})_{\ell\in E}$ has a multinomial distribution with parameters $i_k$ and $(u_{k\ell})_{\ell\in E}$, i.e. $\pr(M_k=j)=i_k!\prod_{\ell\in E}u_{k\ell}^{j_\ell}/j_\ell!$ for all $j=(j_\ell)_{\ell\in E}\in\nz_0^E$ with $\sum_{\ell\in E} j_\ell=i_k$. Let $\Pi^{\rm mut}=(\pi_{ij}^{\rm mut})_{i,j\in\Delta_N(E)}$ denote the matrix with entries
\begin{equation} \label{aij1}
   \pi_{ij}^{\rm mut}\ :=\ \pr(M_\bullet=j),
   \qquad i,j\in\Delta_N(E),
\end{equation}
where we use the dot subscript summation notation $M_\bullet:=\sum_{k\in E}M_k$. It is readily checked that
\begin{eqnarray}
   \pi_{ij}^{\rm mut}
   & = & \sum_M \prod_{k\in E}\pr(M_k=(m_{k\ell})_{\ell\in E})\nonumber\\
   & = & \sum_M \prod_{k\in E}
   \bigg(i_k!\prod_{\ell\in E}\frac{u_{k\ell}^{m_{k\ell}}}{m_{k\ell}!}\bigg) \label{aij2}
\end{eqnarray}
for all $i,j\in\Delta_N(E)$, where the sum extends over all matrices $M=(m_{k\ell})_{k,\ell\in E}\in\nz_0^{E\times E}$ having row sums $m_{k\bullet}:=\sum_{\ell\in E}m_{k\ell}=i_k$, $k\in E$, and column sums $m_{\bullet\ell}:=\sum_{k\in E}m_{k\ell}=j_\ell$, $\ell\in E$. Clearly, the matrix $\Pi^{\rm mut}$ depends on the mutation matrix $U=(u_{k\ell})_{k,\ell\in E}$. For example, for $E=\{1,2\}$ (two types) it follows that
\begin{eqnarray*}
   \pi_{ij}^{\rm mut}
   & = & \pr(M_{11}+M_{21}=j_1,M_{12}+M_{22}=j_2)\\
   & = & \pr(M_{11}+M_{21}=j_1),
\end{eqnarray*}
for all $i=(i_1,i_2)\in\nz_0^2$ and $j=(j_1,j_2)\in\nz_0^2$ with $i_1+i_2=N$ and $j_1+j_2=N$, where $M_{11}$ and $M_{21}$ are independent, $M_{11}$ has a binomial distribution with parameters $i_1$ and $u_{11}$ and $M_{21}$ has a binomial distribution with parameters $i_2=N-i_1$ and $u_{21}$.

The following result clarifies the structure of the transition matrix $\Pi=\Pi(U)$ for general mutation matrix $U$.
\begin{lemma} \label{lemma1}
   The chain $X$ (of the model with general mutation matrix $U$) has transition matrix
   \begin{equation} \label{forwardmatrix}
      \Pi\ =\ \Pi^{\rm rep}\Pi^{\rm mut},
   \end{equation}
   where $\Pi^{\rm rep}:=\Pi(I)$ is the transition matrix of the chain $X$ for the multi-type Cannings model without mutation having entries (\ref{transnomut}) and $\Pi^{\rm mut}$ is the matrix with entries (\ref{aij2}).
\end{lemma}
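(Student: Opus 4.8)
The plan is to condition on the intermediate (pre-mutation) offspring population and exploit the two-step construction of the model. Fix a generation $r$ and condition on $X(r)=i=(i_k)_{k\in E}\in\Delta_N(E)$. Let $Z=(Z_k)_{k\in E}$ denote the vector of type counts of the $N$ offspring produced in the reproduction step, \emph{before} the mutation step is carried out: in the reproduction step every offspring provisionally inherits the type of its parent, so $Z$ is obtained by assigning to each of the $N$ offspring the type of the parent that produced it. Since the offspring sizes $\nu_{1,N},\ldots,\nu_{N,N}$ are exchangeable, the conditional law of $Z$ given $X(r)=i$ coincides with the law of $D(i)$; that is, $\pr(Z=m\mid X(r)=i)=\pi_{im}^{\rm rep}$ for all $m\in\Delta_N(E)$, by (\ref{transnomut}).

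Next I would describe the mutation step conditionally on $Z=m$. By assumption the mutation step is independent of the offspring sizes, and each offspring mutates independently of the others, an offspring of provisional type $k$ becoming an offspring of type $\ell$ with probability $u_{k\ell}$. Grouping the $N$ offspring according to their provisional type, the $m_k$ offspring of provisional type $k$ produce, after mutation, a random type-count vector with the multinomial distribution with parameters $m_k$ and $(u_{k\ell})_{\ell\in E}$, and these $|E|$ vectors are independent across $k$. Hence, conditionally on $Z=m$, the next-generation type-count vector $X(r+1)$ has the law of $M_\bullet=\sum_{k\in E}M_k$ with the $M_k$ independent and $M_k$ multinomial with parameters $m_k$ and $(u_{k\ell})_{\ell\in E}$, i.e.\ $\pr(X(r+1)=j\mid Z=m)=\pi_{mj}^{\rm mut}$ with $\pi_{mj}^{\rm mut}$ as in (\ref{aij1}) and (\ref{aij2}). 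The point to be checked here is that this conditional law depends on the reproduction step only through the type-count vector $m$ and not through any finer genealogical information; this holds because an offspring's mutation law is a function of its type alone.

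Combining the two steps by the law of total probability then gives, for all $i,j\in\Delta_N(E)$,
\[
   \pi_{ij}\ =\ \pr(X(r+1)=j\mid X(r)=i)\ =\ \sum_{m\in\Delta_N(E)}\pr(Z=m\mid X(r)=i)\,\pr(X(r+1)=j\mid Z=m)\ =\ \sum_{m\in\Delta_N(E)}\pi_{im}^{\rm rep}\,\pi_{mj}^{\rm mut},
\]
which is exactly the $(i,j)$-entry of the product matrix $\Pi^{\rm rep}\Pi^{\rm mut}$. This establishes (\ref{forwardmatrix}).

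The only genuinely delicate step is the conditional-independence claim in the second paragraph: one must verify that, given the pre-mutation type counts $m$, the post-mutation counts are distributed as the stated sum of independent multinomials and retain no residual dependence on $i$ or on which parent produced which offspring. This follows at once from the independence of the mutation events across offspring together with the fact that the mutation probabilities $u_{k\ell}$ depend on the mutating offspring only through its provisional type $k$; granted this, the remainder is routine Markov/total-probability bookkeeping and uses nothing about the $\nu_{i,N}$ beyond what is already encoded in $\Pi^{\rm rep}$ via (\ref{transnomut}).
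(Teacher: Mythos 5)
Your proof is correct and follows essentially the same route as the paper: both decompose the one-generation transition through the pre-mutation type counts, identify the reproduction step with the law of $D(i)$ via exchangeability, and identify the mutation step with the sum of independent multinomials in (\ref{aij1})--(\ref{aij2}). The only difference is presentational -- the paper carries out the equivalent computation explicitly by summing over mutation-count matrices $M$ and then splitting that sum according to the row sums $d$, whereas you package the same step as conditioning on the intermediate variable $Z$ and invoking the conditional independence you correctly flag as the one point needing justification.
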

\begin{remark} \rm
   The transition matrix $\Pi$ thus factorizes into the reproductive part $\Pi^{\rm rep}$ (involving the offspring sizes $\nu_{1,N},\ldots,\nu_{N,N}$) and the mutational part $\Pi^{\rm mut}$ (involving the mutation matrix $U$).
\end{remark}
\begin{proof}[Proof of Lemma \ref{lemma1}]
   Let $i=(i_k)_{k\in E},j=(j_k)_{k\in E}\in\Delta_N(E)$. By the independence of $\nu_N^{(r)}:=(\nu_{1,N}^{(r)},\ldots,\nu_{N,N}^{(r)})$ and $X(r)$,
   \[
   \pi_{ij}\ =\ \sum_m \pr(X(r+1)=j\,|\,X(r)=i,\nu_N^{(r)}=m)\pr(\nu_N^{(r)}=m),
   \]
   where the sum extends over all $m=(m_1,\ldots,m_N)\in\nz_0^N$ satisfying $\pr(\nu_N^{(r)}=m)>0$. Let $I_k:=\{s\in[N]:\mbox{individual $s$ in generation $r$ has type $k$}\}$, $k\in E$. Then,
   \begin{eqnarray*}
      &   & \hspace{-15mm}\pr(X(r+1)=j\,|\,X(r)=i,\nu_N^{(r)}=m)\\
      & = & \sum_M \prod_{k\in E} \bigg(\Big(\sum_{s\in I_k}m_s\Big)!\prod_{\ell\in E}\frac{u_{k\ell}^{m_{k\ell}}}{m_{k\ell}!}\bigg),
   \end{eqnarray*}
   where $m_{k\ell}$ corresponds to the number of children of parents of type $k$ which mutate to type $\ell$. Note that $\sum_{k\in E}m_{k\ell}=j_\ell$, $\ell\in E$, and $\sum_{\ell\in E}m_{k\ell}=\sum_{s\in I_k}m_s$, $k\in E$. The offspring variables $\nu_{1,N},\ldots,\nu_{N,N}$ are exchangeable and the sets $I_k$, $k\in E$, are pairwise disjoint with $\bigcup_{k\in E}I_k=[N]$ and $|I_k|=i_k$, $k\in E$. Therefore,
   \begin{equation} \label{transalt}
      \pi_{ij}
      \ = \ \me\bigg(
            \sum_M\prod_{k\in E}
                \bigg(
                   D_k(i)!\prod_{\ell\in E}\frac{u_{kl}^{m_{k\ell}}}{m_{k\ell}!}
               \bigg)
            \bigg),
   \end{equation}
   where the sum extends over all matrices $M=(m_{k\ell})_{k,\ell\in E}\in\nz_0^{E\times E}$ having column sums $\sum_{k\in E}m_{k\ell}=j_\ell$, $\ell\in E$, and row sums $\sum_{\ell\in E}m_{k\ell}=D_k(i)$, $k\in E$. Alternatively,
   \begin{equation} \label{trans}
      \pi_{ij}\ =\ \sum_M \pr(D(i)=d)\,
      \prod_{k\in E}\bigg(
         d_k!\prod_{\ell\in E}\frac{u_{k\ell}^{m_{k\ell}}}{m_{k\ell}!}
      \bigg),
   \end{equation}
   where the sum extends over all matrices $M=(m_{k\ell})_{k,\ell\in E}\in\nz_0^{E\times E}$ having column sums $\sum_{k\in E}m_{k\ell}=j_\ell$, $\ell\in E$, and $d:=(d_k)_{k\in E}$ is defined via $d_k:=\sum_{\ell\in E}m_{k\ell}$, $k\in E$. It remains to note that the right-hand side of Eq.~(\ref{trans}) is equal to $\sum_{d\in\Delta_N(E)}\pi_{id}^{\rm rep}\pi_{dj}^{\rm mut}=(\Pi^{\rm rep}\Pi^{\rm mut})_{ij}$. Thus, $\Pi=\Pi^{\rm rep}\Pi^{\rm mut}$.
\end{proof}

The transition probability $\pi_{ij}$ simplifies considerably in particular situations, as the following examples demonstrate.
\begin{example} (parent independent mutation)
   If the mutation probabilities $u_{k\ell}=u_\ell$ do not depend on the type $k\in E$ of the parent, then (\ref{trans}) reduces to the multinomial expression
   \begin{equation} \label{trans1}
      \pi_{ij}\ =\ N!\prod_{k\in E}\frac{u_k^{j_k}}{j_k!},
   \end{equation}
   $i=(i_k)_{k\in E},j=(j_k)_{k\in E}\in \Delta_N(E)$. Eq.~(\ref{trans1}) is obvious from the model and can be also derived from (\ref{forwardmatrix}) as follows. By the convolution property for multinomial distributions ${\rm Mn}(i_k,u)$, $k\in E$, with the same probability vector $u:=(u_\ell)_{\ell\in E}$, $M_\bullet
   =\sum_{k\in E}M_k$ has a multinomial distribution with parameters $\sum_{k\in E}i_k=N$ and $u$. Thus, by (\ref{aij1}), $\pi_{ij}^{\rm mut}=\pr(M_\bullet=j)={\rm Mn}(N,u)(j)$ does not depend on $i$ and (\ref{trans1}) follows from (\ref{forwardmatrix}). Note that the transition probability $\pi_{ij}$ in (\ref{trans1}) neither depends on $i$ nor on the offspring sizes $\nu_{1,N},\ldots,\nu_{N,N}$. If $K:=|E|\in\nz$ and $u_{k\ell}:=1/K$ for all $k,\ell\in E$ then $\pi_{ij}=N!K^{-N}/(j_1!\cdots j_K!)$.
\end{example}
\begin{example} (absence of mutation)
   If $U=I$ (identity matrix), which corresponds to the multi-allelic Cannings model without mutation, then the matrix $\Pi^{\rm mut}$ with entries (\ref{aij1}) is the identity matrix and the transition matrix $\Pi=\Pi^{\rm rep}\Pi^{\rm mut}=\Pi^{\rm rep}$ has entries (\ref{transnomut}).
\end{example}
\begin{example}
   (multi-allelic Wright--Fisher model with mutation) If the family size vector $\nu_N:=(\nu_{1,N},\ldots,\nu_{N,N})$ has a symmetric multinomial distribution ${\rm Mn}(N,(1/N)_{i\in[N]})$, then $D(i)=(D_k(i))_{k\in E}\stackrel{d}{=}{\rm Mn}(N,(i_k/N)_{k\in E})$. Plugging $\pr(D(i)=d)=N!N^{-N}\prod_{k\in E}i_k^{d_k}/d_k!$ into (\ref{trans}), the factorials $d_k!$ cancel, and from $\sum_{l\in E}m_{kl}=d_k$ it follows that
   \[
   \pi_{ij}\ =\ \frac{N!}{N^N}\sum_M \prod_{k\in E}\prod_{\ell\in E} \frac{(u_{k\ell}i_k)^{m_{k\ell}}}{m_{k\ell}!},
   \]
   where the sum extends over all matrices $M=(m_{k\ell})_{k,\ell\in E}$ with $\sum_{k\in E}m_{k\ell}=j_\ell$, $\ell\in E$. Applying the binomial expansion $n!\sum_{(n_k)_{k\in E}\in\Delta_n(E)}\prod_{k\in E}(x_k^{n_k}/n_k!)=(\sum_{k\in E}x_k)^n$, $n\in\nz_0$, $x_k\in\rz$, $k\in E$, for each $\ell\in E$ with $n:=j_\ell$, $n_k:=m_{k\ell}$ and $x_k:=u_{k\ell}i_k$ shows that (\ref{trans}) simplifies to
   \begin{equation} \label{wfm_trans}
      \pi_{ij}
      \ =\ N!\prod_{\ell\in E}\frac{\pi_\ell^{j_\ell}}{j_\ell!},
      \qquad i,j\in\Delta_N(E),
   \end{equation}
   with $\pi_\ell:=N^{-1}\sum_{k\in E}u_{k\ell}i_k$ for $\ell\in E$. In particular, conditional on $X(r)=i$, for every $\ell\in E$ the random variable $X_\ell(r+1)$ has a binomial distribution with parameters $N$ and $\pi_\ell$, converging as $N\to\infty$ in distribution to a Poisson distribution with parameter $N\pi_\ell=\sum_{k\in E}u_{k\ell}i_k$. For $|E|<\infty$ the chain $X$ and its diffusion limit as $N\to\infty$ of this multi-allele Wright--Fisher model with mutation has been studied extensively in the (classical) literature on mathematical population genetics. We refer the reader exemplary to Etheridge \cite{Etheridge2011}, Ewens \cite{Ewens2004} and Griffiths \cite{Griffiths1979,Griffiths1980a,Griffiths1980b}.
\end{example}
A further example, the multitype Kimura model, is deferred to Subsection \ref{kimuramodel}.

The forward structure is often alternatively described by the process $\widetilde{X}:=(\widetilde{X}(r))_{r\in\nz_0}$ having state space $E^N$, where $\widetilde{X}(r):=(\widetilde{X}_i(r))_{i\in[N]}$ and $\widetilde{X}_i(r)$ denotes the type of the \mbox{$i$-th} individual alive in generation $r\in\nz_0$. Processes of this form are extensively used, for example in Birkner et al. \cite{BirknerBlathMoehleSteinrueckenTams2009} even for more general type spaces $E$. Let $f:E^N\to\Delta_N(E)$ denote the function which maps $x\in E^N$ to $f(x):=i:=(i_k)_{k\in E}$ with $i_k:=|\{i\in[N]:x_i=k\}|$, $k\in E$. The process $X$ is easily pathwise recovered from $\widetilde{X}$ via $X(r)=f(\widetilde{X}(r))$, $r\in\nz_0$. Strictly speaking, the process $\widetilde{X}$ contains (pathwise) slightly more information than $X$, since the type of each individual is known. However, due to the exchangeability of the model arising from the random assignment condition, from the distributional point of view the processes $X$ and $\widetilde{X}$ contain the same information. More precisely, the transition probabilities $\widetilde\pi_{xy}:=\pr(\widetilde{X}(r+1)=y\,|\,\widetilde{X}(r)=x)$, $x,y\in E^N$,   of the process $\widetilde{X}$ are related to those of $X$ via
\begin{equation} \label{transrelation}
   \widetilde\pi_{xy}
   \ =\ \frac{\pi_{ij}}{|f^{-1}(\{j\})|}
   \ =\ \frac{\prod_{k\in E}j_k!}{N!}\pi_{ij},
\end{equation}
where $i:=(i_k)_{k\in E}:=f(x)$ and $j:=(j_k)_{k\in E}:=f(y)$. Alternatively,
\begin{equation} \label{tildetrans}
   \widetilde\pi_{xy}
   \ =\ \frac{1}{N!}\sum_{\sigma\in S_N}\me\bigg(
   \prod_{i=1}^N \prod_{j=C_{i-1}+1}^{C_i}u_{x_iy_{\sigma j}}
   \bigg),
\end{equation}
where $S_N$ denotes the set of all permutations of $[N]$, $C_0:=0$ and $C_i:=\nu_{1,N}+\cdots+\nu_{i,N}$ for all $i\in[N]$. For $N=1$ we recover the mutation probabilities $\widetilde\pi_{xy}=u_{xy}$, $x,y\in E$.

For parent independent mutation, that is, $u_{k\ell}=u_\ell$ for all $k,\ell\in E$, Eq.~(\ref{tildetrans}) reduces to
\[
\widetilde\pi_{xy}
\ =\ \frac{1}{N!}\sum_{\sigma\in S_N}\me\bigg(\prod_{j=1}^N u_{y_{\sigma j}}\bigg)
\ =\ u_{y_1}\cdots u_{y_N},
\]
which neither depends on $i$ nor on $(\nu_{1,N},\ldots,\nu_{N,N})$.

In absence of mutation ($u_{kk}=1$ for all $k\in E$),
\[
\widetilde\pi_{xy}\ =\ \frac{\prod_{k\in E}j_k!}{N!}\pr(D(i)=j),
\qquad x,y\in E^N,
\]
where $i:=(i_k)_{k\in E}:=f(x)$ and $j:=(j_k)_{k\in E}:=f(y)$ and $D(i):=(D_k(i))_{k\in E}$ with $D_k(i):=\sum_{s=s_{k-1}+1}^{s_k}\nu_{s,N}$, $s_0:=0$ and $s_k:=i_1+\cdots+i_k$ for $k\in E$.

For the multi-type Wright--Fisher model with mutation, each child $j\in[N]$ chooses randomly and independently its parent. Thus,
\begin{eqnarray}
   \widetilde\pi_{xy}
   & = & \prod_{j\in[N]} \sum_{i\in[N]} \pr(\mbox{child $j$ chooses parent $i$})u_{x_iy_j}\nonumber\\
   & = & \frac{1}{N^N}\prod_{j\in[N]} \sum_{i\in[N]} u_{x_iy_j}
\qquad x,y\in E^N.
\end{eqnarray}
Fix $x\in E^N$ and $j\in\Delta_N(E)$. Define $i:=f(x)$. Then,
\begin{eqnarray}
   &   & \hspace{-15mm}\sum_{y\in f^{-1}(\{j\})} \widetilde\pi_{xy}\nonumber\\
   & = & \frac{1}{N^N}\sum_{y\in f^{-1}(\{j\})}
         \prod_{n\in[N]}
         (u_{x_1y_n}+\cdots+u_{x_Ny_n})
         \nonumber\\
   & = & \frac{1}{N^N}\sum_{y\in f^{-1}(\{j\})}
         \prod_{n\in[N]}\bigg(\sum_{k\in E}i_ku_{ky_n}\bigg)
         \nonumber\\
   & = & \frac{1}{N^N}\sum_{y\in f^{-1}(\{j\})}
         \prod_{\ell\in E} \bigg(\sum_{k\in E}i_ku_{k\ell}\bigg)^{j_\ell}
         \nonumber\\
   & = & \sum_{y\in f^{-1}(\{j\})}
         \prod_{\ell\in E}\pi_\ell^{j_\ell}\nonumber\\
   & = & |f^{-1}(\{j\})|\prod_{\ell\in E}\pi_\ell^{j_\ell}
   \ = \ N!\prod_{\ell\in E}\frac{\pi_\ell^{j_\ell}}{j_\ell!}.
   \label{rosenblatt}
\end{eqnarray}
This expression depends only via $i=f(x)$ on $x$. Thus, by Rosenblatt's criterion for functions of Markov processes, with $\tilde{X}$ also the process $X=(X(r))_{r\in\nz_0}=(f(\widetilde{X}(r)))_{r\in\nz_0}$ is Markovian and (\ref{rosenblatt}) is the transition probability $\pi_{ij}$ of the chain $X$ to move from $i$ to $j$, in agreement with (\ref{wfm_trans}).


\subsection{A limiting multi-type branching process} \label{branching}
Assume that the number of type is finite but not equal to $1$; without loss of generality, $E=[K]$ with $K\in\nz\setminus\{1\}$. Define $L:=K-1\in\nz$. In the following the space $S_{N,L}:=S_N([L])=\{i=(i_\ell)_{\ell\in[L]}\in\nz_0^L\,:\,\sum_{\ell\in[L]}i_\ell\le N\}$ plays a crucial role. So far, the model was described forward in time by the process $X=(X(r))_{r\in\nz_0}$ having state space $\Delta_{N,K}:=\Delta_N([K])=\{i=(i_k)_{k\in E}\in\nz_0^K:\sum_{k\in[K]}i_k=N\}$. Note that $|S_{N,L}|=\sum_{n=0}^N|\Delta_{n,L}|=\sum_{n=0}^N \binom{n+L-1}{n}=\binom{N+L}{N}=\binom{N+K-1}{N}=|\Delta_{N,K}|$. Since the last component $X_K(r)=N-\sum_{\ell=1}^LX_\ell(r)$ is determined by the other components $X_1(r),\ldots,X_L(r)$, we can disregard the last component $X_K(r)$ and view the forward process $X=(X(r))_{r\in\nz_0}$ as a process with state space $S_{N,L}$ by writing $X(r)$ equivalently in the form
\begin{equation} \label{forward}
   X_r\ :=\ (X_\ell(r))_{\ell\in[L]},\qquad r\in\nz_0.
\end{equation}
The following definition is useful to understand the behavior of $X_r$ as the total population $N$ tends to infinity.
\begin{definition} 
   The offspring sizes $\nu_{1,N},\ldots,\nu_{N,N}$ are said to be asymptotically independent, if there exist independent random variables $\xi_1,\xi_2,\ldots$ such that
   \begin{equation}
      \lim_{N\to\infty}\pr(\nu_{1,N}=m_1,\ldots,\nu_{j,N}=m_j)
      \ =\ \prod_{i=1}^j\pr(\xi_i=m_i)
   \end{equation}
   for all $j\in\nz$ and $m_1,\ldots,m_j\in\nz_0$.
\end{definition}
The exchangeability of $\nu_{1,N},\ldots,\nu_{N,N}$ implies that the random variables $\xi_1,\xi_2,\ldots$ are identically distributed. As in \cite[p.~491]{Kaemmerle1991} or \cite[Lemma 2.1]{Moehle1994} it follows that $\nu_{1,N},\ldots,\nu_{N,N}$ are asymptotically independent if and only if there exists a random variable $\xi$ such that, for all $m\in\nz_0$, $\pr(\nu_{1,N}=m)\to\pr(\xi=m)$ and $\pr(\nu_{1,N}=\nu_{2,N}=m)\to(\pr(\xi=m))^2$ as $N\to\infty$. We call $\xi$ the limiting variable. Note that $\me(\xi)\le 1$.
There exist examples with $\me(\xi)<1$. For example, if $(\nu_{1,N},\ldots,\nu_{N,N})$ is a random permutation of $(N,0,\ldots,0)$, then $\nu_{1,N},\ldots,\nu_{N,N}$ are asymptotically independent with limiting variable $\xi=0$.

We shall see soon that the asymptotic independence of the offspring sizes alone will not be sufficient to ensure convergence of the forward process to a limiting process as $N\to\infty$. An additional assumption on the mutation probabilities is needed, which turns out to be of the form
\begin{equation} \label{additional}
   u_{KK}\ =\ 1,
\end{equation}
that is, mutations from the `last' type $K$ back to any other type $k<K$ are impossible. Clearly, (\ref{additional}) wipes out several important models. The remark at the end of this section shows that the following results fail if (\ref{additional}) does not hold. The convergence results presented later (Theorem \ref{gwplimit}) are based on the following lemma.
\begin{lemma} \label{gwplemma}
   Suppose that the offspring sizes $\nu_{1,N},\ldots,\nu_{N,N}$ are asymptotically independent with limiting variable $\xi$. Let $K\in\nz\setminus\{1\}$ and set $L:=K-1\in\nz$. If $u_{KK}=1$, that is, mutations from type $K$ to any type $\ell\in[L]$ are not possible, then
   \begin{equation} \label{gwpassumption}
      \pr(X_1=j\,|\,X_0=i)
      \ \to \ \pr\bigg(\bigcap_{\ell\in[L]}\bigg\{\sum_{k\in[L]}
           Y_{k\ell}^{*i_k}=j_\ell\bigg\}\bigg)
   \end{equation}
   as $N\to\infty$ for all $i=(i_\ell)_{\ell\in[L]},j=(j_\ell)_{\ell\in[L]}\in\nz_0^L$, where $Y_{k\ell}^{*i_k}$ denotes the $i_k$-th convolution of $Y_{k\ell}$ and $Y_k=(Y_{k\ell})_{\ell\in[L]}$, $k\in[L]$, are independent $\nz_0^L$-valued random variables having distribution
\begin{equation} \label{ykdist}
   \pr(Y_k=j)
   \ =\ \me\big((\xi)_{|j|}u_{kK}^{\xi-|j|}\big)
   \prod_{\ell\in[L]}\frac{u_{k\ell}^{j_\ell}}{j_\ell!},
\end{equation}
$j=(j_\ell)_{\ell\in[L]}\in\nz_0^L$,
with $|j|:=\sum_{\ell\in[L]}j_\ell$. In (\ref{ykdist}), the notation $(x)_0:=1$ and $(x)_n:=x(x-1)\cdots(x-n+1)$ for $x\in\rz$ and $n\in\nz$ is used.
\end{lemma}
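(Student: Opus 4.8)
The plan is to express the one‑step transition probability as an expectation over the offspring sizes of the finitely many parents of types $1,\ldots,L$ and then to let $N\to\infty$, exploiting asymptotic independence together with the fact that, on a discrete state space, weak convergence is convergence in total variation.

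I would start by fixing the initial configuration $X_0=i=(i_\ell)_{\ell\in[L]}$; note that $m:=\sum_{\ell\in[L]}i_\ell$ does not depend on $N$ and that $i\in S_{N,L}$ automatically once $N$ is large. Because the model is neutral and $(\nu_{1,N},\ldots,\nu_{N,N})$ is exchangeable, the law of $X_1$ is unchanged if the type-$k$ individuals of generation $0$ are placed on any fixed set of labels, so assume labels $1,\ldots,m$ carry types $1,\ldots,L$, grouped into blocks $I_1,\ldots,I_L$ with $|I_k|=i_k$, while labels $m+1,\ldots,N$ carry type $K$. For a parent $s\in I_k$ let $M_s=(M_{s\ell})_{\ell\in[L]}$ record how many of its $\nu_{s,N}$ offspring mutate to types $1,\ldots,L$; conditionally on the offspring sizes the $M_s$ are independent, and $M_s$ is the $[L]$-marginal of a multinomial$(\nu_{s,N},(u_{k\ell})_{\ell\in E})$ vector. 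The point of the hypothesis $u_{KK}=1$ is that it forces $u_{K\ell}=0$ for $\ell\ne K$, so offspring of the $N-m$ type-$K$ parents never land in $[L]$; hence $X_\ell(1)=\sum_{k\in[L]}\sum_{s\in I_k}M_{s\ell}$ for each $\ell\in[L]$, and $X_1$ is a function of the offspring sizes and mutation splits of the $m$ fixed parents only.

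Next, for $j=(j_\ell)_{\ell\in[L]}\in\nz_0^L$ I would write $\pr(X_1=j\mid X_0=i)=\me[g_j(\nu_{1,N},\ldots,\nu_{m,N})]$, where $g_j:\nz_0^m\to[0,1]$, $g_j(v_1,\ldots,v_m):=\pr(\sum_{s=1}^m M_s=j\mid \nu_{s,N}=v_s,\ 1\le s\le m)$, is bounded and depends only on the mutation matrix. By asymptotic independence, $(\nu_{1,N},\ldots,\nu_{m,N})$ converges in distribution to $(\xi_1,\ldots,\xi_m)$, iid copies of the limiting variable $\xi$; since $\nz_0^m$ is discrete this is total-variation convergence (Scheff\'e), so $\me[g_j(\nu_{1,N},\ldots,\nu_{m,N})]\to\me[g_j(\xi_1,\ldots,\xi_m)]$. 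Finally I would identify the limit: conditionally on $\xi_s$, the limiting split $\widetilde M_s$ of a parent $s\in I_k$ is the $[L]$-marginal of a multinomial$(\xi_s,(u_{k\ell})_{\ell\in E})$ vector, so marginalising out the $K$-coordinate gives $\pr(\widetilde M_s=j)=\me[(\xi)_{|j|}u_{kK}^{\xi-|j|}]\prod_{\ell\in[L]}u_{k\ell}^{j_\ell}/j_\ell!$, which is exactly $\pr(Y_k=j)$ from (\ref{ykdist}); the $\widetilde M_s$ are independent, and summing the $i_k$ of them attached to labels in $I_k$ produces a vector whose $\ell$-th coordinate is the $i_k$-fold convolution $Y_{k\ell}^{*i_k}$, the blocks for different $k$ being independent. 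Hence $\me[g_j(\xi_1,\ldots,\xi_m)]=\pr\big(\bigcap_{\ell\in[L]}\{\sum_{k\in[L]}Y_{k\ell}^{*i_k}=j_\ell\}\big)$, which is (\ref{gwpassumption}).

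The main obstacle is the infinite sum buried in $g_j$: a parent may have arbitrarily many offspring and still send none of them to types $1,\ldots,L$, so $\pr(X_1=j\mid X_0=i)$ is not a finite linear combination of probabilities $\pr(\nu_{s,N}=v_s)$ and one cannot pass to the limit term by term. Writing it as $\me[g_j(\nu_{1,N},\ldots,\nu_{m,N})]$ with $g_j$ \emph{bounded}, and using that weak convergence on $\nz_0^m$ upgrades to total-variation convergence, circumvents this cleanly; the remaining ingredients — the multinomial form of the mutation step, the vanishing of the $\Theta(N)$ type-$K$ parents' contribution under $u_{KK}=1$, and the convolution identity — are then routine.
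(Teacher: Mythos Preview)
Your proof is correct and follows essentially the same route as the paper: both exploit $u_{KK}=1$ to eliminate the contribution of the $N-m$ type-$K$ parents, reduce $\pi_{ij}$ to the expectation of a bounded function of the finitely many offspring sizes attached to types $1,\ldots,L$, and pass to the limit via asymptotic independence on the discrete space $\nz_0^m$. The only organizational difference is that the paper first aggregates by type (working with $D_k=\sum_{s\in I_k}\nu_{s,N}\to\xi^{*i_k}$ and then invoking the multinomial convolution property stated in the remark preceding the proof), whereas you keep the $m$ parents separate and identify each parent's split directly with an independent copy of $Y_k$; your explicit appeal to Scheff\'e/total variation also makes the limit passage slightly more transparent than the paper's one-line ``it follows that''.
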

\begin{remark} \rm
   Fix $k\in[L]$. Eq.~(\ref{ykdist}) essentially states that, conditional on $\xi$, $(Y_{k1},\ldots,Y_{kL},\xi-\sum_{\ell\in[L]} Y_{k\ell})$ has a multinomial distribution with parameters $\xi$ and $(u_{k\ell})_{\ell\in E}$. By the convolution property for multinomial distributions with the same probability vector, conditional on $\xi$, for any $n\in\nz_0$, the $n$-th convolution $(Y_{k1}^{*n},\ldots,Y_{kL}^{*n},(\xi-\sum_{\ell\in [L]}Y_{k\ell})^{*n})$ has a multinomial distribution with parameters $\xi^{*n}$ and $(u_{k\ell})_{\ell\in E}$. In particular, for all $n\in\nz_0$ and $(j_\ell)_{\ell\in[L]}\in\nz_0^L$,
   \[
   \pr\bigg(\bigcap_{\ell\in[L]}\{Y_{k\ell}^{*n}=j_\ell\}\bigg)\ =\ \me((\xi^{*n})_{|j|}u_{kK}^{\xi^{*n}-|j|})\prod_{\ell\in[L]}
   \frac{u_{k\ell}^{j_\ell}}{j_\ell!},
   \]
   where $|j|:=\sum_{\ell\in[L]} j_\ell$. This convolution property turns out to be crucial for the proof of Lemma \ref{gwplemma} and ensures the branching property of the limiting process arising in Theorem \ref{gwplimit} below.
\end{remark}
\begin{remark} \rm
   For $K=2$, (\ref{gwpassumption}) reduces to $\pr(X_1(1)=j\,|\,X_1(0)=i)\to\pr(Y^{*i}=j)$ as $N\to\infty$ for all $i,j\in\nz_0$, where $Y:=Y_{11}$ has distribution $\pr(Y=j)=\me\big(\binom{\xi}{j}u_{11}^ju_{12}^{\xi-j}\big)$, $j\in\nz_0$. Conditional on $\xi$, $Y$ has a binomial distribution with parameters $\xi$ and $u_{11}$.
\end{remark}
\begin{proof}[Proof of Lemma \ref{gwplemma}]
   Fix $(i_\ell)_{\ell\in[L]}, (j_\ell)_{\ell\in[L]}\in\nz_0^L$ and let $N$ be sufficiently large such that $i_K:=N-\sum_{\ell\in[L]}i_\ell\in\nz_0$ and $j_K:=N-\sum_{\ell\in[L]}j_\ell\in\nz_0$. Define $i:=(i_k)_{k\in E}$ and $j:=(j_k)_{k\in E}$. We have to verify that $\pi_{ij}$ converges to the right-hand side of (\ref{gwpassumption}) as $N\to\infty$. Since $u_{KK}=1$ and, hence, $u_{K\ell}=0$ for all $\ell\in[L]$, it follows that only matrices $M=(m_{k\ell})_{k,\ell\in E}$ with last row $(m_{K\ell})_{\ell\in[K]}$ equal to $(0,\ldots,0,D_k)$ contribute to the sum in (\ref{transalt}). Thus, (\ref{transalt}) reduces to
   \[
   \pi_{ij}\ =\
   \me\bigg(
      \sum_M \prod_{k\in[L]}\bigg(
      (D_k)_{m_k}u_{kK}^{D_k-m_k}
      \prod_{\ell\in[L]}\frac{u_{k\ell}^{m_{k\ell}}}{m_{k\ell}!}
      \bigg)
   \bigg),
   \]
   where $m_k:=\sum_{\ell\in[L]}m_{k\ell}$ and the sum extends over all `reduced' matrices $M=(m_{k\ell})_{k,\ell\in[L]}\in\nz_0^{L\times L}$ having column sums $\sum_{k\in E}m_{k\ell}=j_\ell$, $\ell\in[L]$. From the asymptotic independence of $\nu_{1,N},\ldots,\nu_{N,N}$ it follows that $D=(D_k)_{k\in[L]}\to (\xi_k^{*i_k})_{k\in[L]}$ in distribution as $N\to\infty$, where $\xi_1,\xi_2,\ldots$ are iid copies of a random variable $\xi$ satisfying $\nu_1\to\xi$ in distribution as $N\to\infty$. Thus, as $N\to\infty$, $\pi_{ij}$ converges to
   \begin{eqnarray*}
      &   & \hspace{-20mm}\sum_M \prod_{k\in[L]} \bigg(\me\big((\xi_k^{*i_k})_{m_k}u_{kK}^{\xi_k^{*i_k}-m_k}\big)
      \prod_{\ell\in[L]}\frac{u_{k\ell}^{m_{k\ell}}}{m_{k\ell}!}\bigg)\\
      & = & \sum_M \prod_{k\in[L]}\pr\bigg(\bigcap_{l\in[L]}\{Y_{k\ell}^{*i_k}=m_{k\ell}\}\bigg)\\
      & = & \sum_M \pr\bigg(\bigcap_{k,\ell\in[L]}\{Y_{k\ell}^{*i_k}=m_{k\ell}\}\bigg)\\
      & = & \pr\bigg(
            \bigcap_{\ell\in[L]}\bigg\{\sum_{k\in[L]} Y_{k\ell}^{*i_k}=j_\ell\bigg\}
            \bigg).
   \end{eqnarray*}
   The proof is complete.
\end{proof}
Before we come to the main convergence result, we provide some more information on $Y_k$.
\begin{lemma}[Descending factorial moments of $Y_k$]
   \label{facmom}
   The joint descending factorial moments $\mu(m):=\me(\prod_{\ell\in[L]}(Y_{k\ell})_{m_\ell})$, $m=(m_\ell)_{\ell\in[L]}\in\nz_0^L$, of $Y_k=(Y_{k\ell})_{\ell\in[L]}$ are given by
   \begin{equation}
      \mu(m)
      \ =\ \me\big((\xi)_{|m|}\big)\prod_{\ell\in[L]}u_{k\ell}^{m_\ell},
   \end{equation}
   where $|m|:=\sum_{\ell\in[L]} m_\ell$.
\end{lemma}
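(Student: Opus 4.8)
The plan is to condition on the limiting variable $\xi$ and thereby reduce the computation to the standard formula for the joint descending factorial moments of a multinomial vector. By the first remark following Lemma \ref{gwplemma}, conditionally on $\xi$ the vector $(Y_{k1},\ldots,Y_{kL},\xi-\sum_{\ell\in[L]}Y_{k\ell})$ has a multinomial distribution with parameters $\xi$ and $(u_{k\ell})_{\ell\in E}$; in particular, conditionally on $\xi$, the vector $(Y_{k1},\ldots,Y_{kL})$ is distributed as the first $L$ coordinates of a multinomial$(\xi,(u_{k\ell})_{\ell\in E})$ vector.

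The key ingredient I would invoke is the elementary identity: if $(Z_1,\ldots,Z_s)$ is multinomially distributed with parameters $n\in\nz_0$ and $(p_1,\ldots,p_s)$, then $\me\big(\prod_{\ell\in[s]}(Z_\ell)_{m_\ell}\big)=(n)_{|m|}\prod_{\ell\in[s]}p_\ell^{m_\ell}$ for every $m=(m_\ell)_{\ell\in[s]}\in\nz_0^s$, where $|m|:=\sum_{\ell\in[s]}m_\ell$. This follows at once from the probability generating function $\me(\prod_{\ell}t_\ell^{Z_\ell})=(\sum_{\ell}p_\ell t_\ell)^n$ by differentiating $m_\ell$ times in each variable $t_\ell$ and evaluating at $t_1=\cdots=t_s=1$ (or by a direct combinatorial count of ordered selections of size $m_\ell$ from the type-$\ell$ balls).

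Applying this with $s=L$, $n=\xi$ and $p_\ell=u_{k\ell}$ — the suppressed $K$-th coordinate $\xi-\sum_{\ell\in[L]}Y_{k\ell}$ playing no role since its exponent in $\prod_{\ell\in[L]}(Y_{k\ell})_{m_\ell}$ is zero — I obtain $\me\big(\prod_{\ell\in[L]}(Y_{k\ell})_{m_\ell}\,\big|\,\xi\big)=(\xi)_{|m|}\prod_{\ell\in[L]}u_{k\ell}^{m_\ell}$. Taking expectations over $\xi$ then yields $\mu(m)=\me\big((\xi)_{|m|}\big)\prod_{\ell\in[L]}u_{k\ell}^{m_\ell}$, which is the assertion.

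I do not expect a genuine obstacle here, as the statement is essentially a moment computation; the only point to watch is the bookkeeping around the suppressed coordinate, whose probability $u_{kK}=1-\sum_{\ell\in[L]}u_{k\ell}$ is exactly what makes the relevant sums telescope. As an alternative avoiding conditioning, one can compute $\mu(m)$ directly from the explicit law (\ref{ykdist}): substituting $j_\ell=m_\ell+i_\ell$, cancelling $(j_\ell)_{m_\ell}/j_\ell!=1/i_\ell!$, using the factorisation $(\xi)_{|m|+|i|}=(\xi)_{|m|}(\xi-|m|)_{|i|}$, and summing over $i\in\nz_0^L$ by the multinomial theorem together with $\sum_{\ell\in[L]}u_{k\ell}=1-u_{kK}$; the inner binomial sum then collapses to $1$, leaving precisely $\me\big((\xi)_{|m|}\big)\prod_{\ell\in[L]}u_{k\ell}^{m_\ell}$.
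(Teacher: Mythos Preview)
Your argument is correct. The primary route you take---conditioning on $\xi$, invoking the standard multinomial factorial-moment identity $\me\big(\prod_\ell (Z_\ell)_{m_\ell}\big)=(n)_{|m|}\prod_\ell p_\ell^{m_\ell}$, and then averaging over $\xi$---is legitimate because the remark after Lemma~\ref{gwplemma} explicitly licenses the joint construction of $(\xi,Y_k)$ with the stated conditional multinomial law, and the marginal of $Y_k$ is exactly~(\ref{ykdist}).

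This differs from the paper's proof, which does not condition but works directly from the explicit distribution~(\ref{ykdist}): it expands $\mu(m)$ as a sum over $j\in\nz_0^L$, groups according to $|j|$, collapses the inner sum via the multinomial formula to obtain a factor $u_k^{j-|m|}/(j-|m|)!$ with $u_k=\sum_{\ell\in[L]}u_{k\ell}$, and then uses the binomial identity $(u_{kK}+u_k)^{\xi-|m|}=1$ to finish. Your conditioning argument is shorter and more structural, exploiting a fact the paper has already recorded; the paper's direct computation is self-contained and does not rely on having set up the joint law $(\xi,Y_k)$. The alternative you sketch in your final paragraph (substituting $j_\ell=m_\ell+i_\ell$, using $(\xi)_{|m|+|i|}=(\xi)_{|m|}(\xi-|m|)_{|i|}$, and summing via the multinomial and binomial theorems) is precisely the paper's computation, so you have in fact reproduced both approaches.
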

\begin{proof}[Proof of Lemma \ref{facmom}]
   Let $k\in[L]$ and $m=(m_\ell)_{\ell\in[L]}\in\nz_0^L$. By (\ref{ykdist}),
   \begin{eqnarray*}
      &   & \hspace{-10mm}\mu(m)
      \ = \ \sum_{j_1,\ldots,j_L\in\nz_0}
            \pr(Y_k=(j_1,\ldots,j_L))
            \prod_{\ell\in[L]}(j_\ell)_{m_\ell}\\
      & = & \sum_{j\in\nz_0}\me\big((\xi)_ju_{kK}^{\xi-j}\big)
            \sum_{\substack{j_1,\ldots,j_L\in\nz_0\\j_1+\cdots+j_L=j}}
            \prod_{\ell\in[L]}\frac{(j_\ell)_{m_\ell}u_{k\ell}^{j_\ell}}{j_\ell!}\\
      & = & \sum_{j\ge|m|}
            \me\big((\xi)_ju_{kK}^{\xi-j}\big)
            \sum_{\substack{j_1\ge m_1,\ldots,j_L\ge m_L\\j_1+\cdots+j_L=j}}
            \prod_{\ell\in[L]}\frac{u_{k\ell}^{j_\ell}}{(j_\ell-m_\ell)!}.
   \end{eqnarray*}
   By the multinomial formula, the last sum reduces to $u_k^{j-|m|}/(j-|m|)!\prod_{\ell\in[L]}u_{k\ell}^{j_\ell}$, where $u_k:=\sum_{\ell\in[L]}u_{k\ell}$. Thus,
   \begin{eqnarray*}
      \mu(m)
      & = & \sum_{j\ge|m|}\me\big((\xi)_ju_{kK}^{\xi-j}\big)
            \frac{u_k^{j-|m|}}{(j-|m|)!}
            \prod_{\ell\in[L]}u_{k\ell}^{m_\ell}\\
      & = & \me\bigg((\xi)_{|m|}
               \sum_{j\ge|m|} \binom{\xi-|m|}{j-|m|} u_{kK}^{\xi-j}
               u_k^{j-|m|}
            \bigg)\prod_{\ell\in[L]}u_{k\ell}^{m_\ell}.
   \end{eqnarray*}
   The result follows from $\sum_{j\ge|m|}\binom{\xi-|m|}{j-|m|}u_{kK}^{\xi-j}u_k^{j-|m|}=(u_{kK}+u_k)^{\xi-|m|}=1^{\xi-|m|}=1$.
\end{proof}
\begin{remark} \rm
   Note that $Y_{k\ell}$ has descending factorial moments $\me((Y_{k\ell})_m)=\me((\xi)_m)u_{k\ell}^m$, $m\in\nz_0$, in agreement with the fact that, conditional on $\xi$, $Y_{k\ell}$ has a binomial distribution with parameters $\xi$ and $u_{k\ell}$. Moreover, for $\ell_1,\ell_2\in[L]$ with $\ell_1\ne\ell_2$,
   \begin{equation} \label{cov}
      {\rm Cov}(Y_{k\ell_1},Y_{k\ell_2})
      \ =\ \big(\me((\xi)_2)-(\me(\xi))^2\big)u_{k\ell_1}u_{k\ell_2}.
   \end{equation}
   Thus, in general, the random variables $Y_{k\ell}$, $\ell\in[L]$, are not pairwise uncorrelated. A nice exception occurs when $\xi$ is Poisson distributed, since $\me((\xi)_2)=(\me(\xi))^2$ in this case. A concrete such exception is the Wright--Fisher model.
\end{remark}
The following result (Theorem \ref{gwplimit}) shows that, under the assumptions of Lemma \ref{gwplemma}, the forward structure can be approximated for large $N$ by a multi-type Galton--Watson branching process. For general information on multi-type branching processes we refer the reader to Athreya and Ney \cite[Chapter V]{AthreyaNey1972}, Harris \cite[Chapter II]{Harris1963} and Mode \cite{Mode1971}.
\begin{theorem}[Multi-type branching process limit] \label{gwplimit}
   \ \\
   Suppose that the assumptions of Lemma \ref{gwplemma} are satisfied. If $X_0\to Z_0$ in distribution as $N\to\infty$ for some $\nz_0^L$-valued random variable $Z_0$, then, as $N\to\infty$, the forward process $X:=(X_r)_{r\in\nz_0}$, defined via (\ref{forward}), converges in $D_{\nz_0^L}(\nz_0)$ to an $L$-type Galton--Watson branching process $Z:=(Z_r)_{r\in\nz_0}$, whose distribution is characterized as follows. For every $k\in[L]$ and $j=(j_\ell)_{\ell\in[L]}\in\nz_0^L$, the probability that a parent of type $k\in[L]$ produces for each $\ell\in[L]$ exactly $j_\ell$ children of type $\ell$ is given by (\ref{ykdist}).
\end{theorem}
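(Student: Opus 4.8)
The plan is to read the limit process off Lemma \ref{gwplemma} and then upgrade pointwise convergence of one-step transition probabilities to path convergence, exploiting that the time set is discrete.

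First I would take as the candidate limit the $L$-type Galton--Watson process $Z=(Z_r)_{r\in\nz_0}$ in which $Z_0$ has the prescribed limiting law and, independently, every individual of type $k\in[L]$ produces an $\nz_0^L$-valued offspring vector with distribution (\ref{ykdist}). One first checks that this is well posed: (\ref{ykdist}) is a probability distribution on $\nz_0^L$ (Lemma \ref{facmom} with $m=0$, or the multinomial interpretation in the remark after Lemma \ref{gwplemma}), so each generation consists of finitely many individuals with a.s.\ $\nz_0^L$-valued offspring vectors, $Z$ lives on $\nz_0^L$, and its one-step kernel $q_{ij}:=\pr(Z_{r+1}=j\mid Z_r=i)$ is a proper stochastic kernel. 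The key point is that $q_{ij}$ coincides with the right-hand side of (\ref{gwpassumption}): by the branching property the number of type-$\ell$ children in generation $r+1$ equals $\sum_{k\in[L]}Y_{k\ell}^{*i_k}$, where $Y_{k\ell}^{*i_k}$ is the $\ell$-th coordinate of the $i_k$-fold convolution of the \emph{joint} vector $Y_k$, and by the multinomial convolution property recorded in the remark following Lemma \ref{gwplemma} this joint convolution is distributed like the offspring of one parent with family size $\xi^{*i_k}$. Hence Lemma \ref{gwplemma} states exactly that $\pi_{ij}\to q_{ij}$ as $N\to\infty$ for every fixed $i,j\in\nz_0^L$.

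Next I would prove convergence of the finite-dimensional distributions. Fix $m\in\nz_0$ and $i_0,\ldots,i_m\in\nz_0^L$; for $N$ large all of them lie in $S_{N,L}$, so by the Markov property $\pr(X_0=i_0,\ldots,X_m=i_m)=\pr(X_0=i_0)\prod_{r=1}^m\pi_{i_{r-1}i_r}$, a product of finitely many factors each converging, since $\pr(X_0=i_0)\to\pr(Z_0=i_0)$ by hypothesis and $\pi_{i_{r-1}i_r}\to q_{i_{r-1}i_r}$ by the previous step. The limit is $\pr(Z_0=i_0)\prod_{r=1}^m q_{i_{r-1}i_r}=\pr(Z_0=i_0,\ldots,Z_m=i_m)$. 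Thus the joint probability mass functions of $(X_0,\ldots,X_m)$ converge pointwise on $(\nz_0^L)^{m+1}$ to that of $(Z_0,\ldots,Z_m)$, which is a genuine probability mass function because $Z_0$ is proper and each $q_{i\cdot}$ is; by Scheff\'e's lemma this pointwise convergence on a countable space upgrades to convergence in distribution of $(X_0,\ldots,X_m)$ to $(Z_0,\ldots,Z_m)$.

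Finally, since both processes are indexed by the discrete time set $\nz_0$, convergence of the finite-dimensional distributions is equivalent to convergence in $D_{\nz_0^L}(\nz_0)$ (Billingsley \cite[p.~19]{Billingsley1999}), exactly as in the proof of Theorem \ref{main2}; this yields $X\Rightarrow Z$ in $D_{\nz_0^L}(\nz_0)$ and completes the argument. The only genuinely delicate step is the identification in the second paragraph: one must be careful that $Y_{k\ell}^{*i_k}$ denotes the $\ell$-th coordinate of the joint $i_k$-fold convolution of $Y_k$ (so distinct coordinates share the convolution index) and then invoke the multinomial convolution property, so that the right-hand side of (\ref{gwpassumption}) is visibly the transition kernel of a branching process and not merely of some Markov chain; the remaining passage from one-step convergence to path convergence is routine for discrete-time chains.
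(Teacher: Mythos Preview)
Your proposal is correct and follows essentially the same route as the paper's own proof: use Lemma~\ref{gwplemma} to obtain convergence of the one-step transition probabilities, factorize the finite-dimensional distributions via the Markov property and time-homogeneity, identify the limiting factors as those of the $L$-type Galton--Watson process $Z$, and then invoke Billingsley \cite[p.~19]{Billingsley1999} to pass from finite-dimensional convergence to convergence in $D_{\nz_0^L}(\nz_0)$. Your version is slightly more explicit in two places (the well-posedness of the limiting branching kernel and the Scheff\'e step from pointwise pmf convergence to convergence in distribution), but these are elaborations of steps the paper takes for granted rather than a different argument.
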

\begin{remark} \rm
   Analogous results for bisexual population models with $K=2$ types are stated in \cite[Theorem 1]{Kaemmerle1991} and \cite[Theorem 2.2 and Corollary 2.4]{Moehle1994}.
\end{remark}
\begin{proof}[Proof of Theorem \ref{gwplimit}]
   Let $n\in\nz_0$ and $j(0),\ldots,j(n)\in\nz_0^L$. For all sufficiently large $N$, by the Markov property and the time-homogeneity of the process $X$,
   \begin{eqnarray*}
      &   & \hspace{-10mm}\pr(X_0=j(0),\ldots,X_n=j(n))\\
      & = & \pr(X_0=j(0))\prod_{r=0}^{n-1}\pr(X_{r+1}=j(r+1)\,|\,X_r=j(r))\\
      & = & \pr(X_0=j(0))\prod_{r=0}^{n-1}\pr(X_1=j(r+1)\,|\,X_0=j(r)).
   \end{eqnarray*}
   By the assumptions and Lemma \ref{gwplemma}, this expression converges as $N\to\infty$ to
   \begin{eqnarray*}
      &   & \hspace{-10mm}
      \pr(Z_0=j(0))\prod_{r=0}^{n-1} \pr\bigg(
      \bigcap_{\ell\in[L]}
         \bigg\{
            \sum_{k\in[L]}Y_{k\ell}^{*j_k(r)}=j_\ell(r+1)
         \bigg\}
      \bigg)\\
      & = & \pr(Z_0=j(0))\prod_{r=0}^{n-1}\pr(Z_{r+1}=j(r+1)\,|\,Z_r=j(r))\\
      & = & \pr(Z_0=j(0),\ldots,Z_n=j(n)).
   \end{eqnarray*}
   The convergence of the finite-dimensional distributions is established. For processes with discrete time set $\nz_0$, the convergence of the finite-dimensional distributions is equivalent (see, for example, Billingsley \cite[p.~19]{Billingsley1999}) to the convergence in $D_{\nz_0^L}(\nz_0)$, i.e. to the weak convergence $Q_N\to Q$ as $N\to\infty$, where $Q_N$ and $Q$ denote the image measures of $X$ and $Z$ respectively. Note that $Q_N$ and $Q$ are probability measures on $(\rz^L)^{\nz_0}$ equipped with the Borel $\sigma$-field  generated by the product topology (of pointwise convergence).
\end{proof}
\begin{example}
   Suppose that $\nu_{1,N},\ldots,\nu_{N,N}$ are asymptotically independent (with iid limiting variables $\xi_1,\xi_2,\ldots$) and that $u_{kk}=1$ for all $k\in E$ (absence of mutation). If $L:=K-1\in\nz$ then the assumptions of Lemma \ref{gwplemma} are obviously satisfied and Theorem \ref{gwplimit} is applicable. From (\ref{ykdist}) it follows that (\ref{gwpassumption}) holds with $Y_{k\ell}:=\xi_\ell$ for $k=\ell$ and $Y_{k\ell}:=0$ for $k\ne\ell$.
\end{example}
\begin{example}
   For the Wright--Fisher model, the offspring sizes $\nu_{1,N},\ldots,\nu_{N,N}$ are asymptotically independent, where the limiting variables $\xi_1,\xi_2,\ldots$ are iid copies of a random variable $\xi$ having a Poisson distribution with parameter $1$. From (\ref{ykdist}) and $\me\big((\xi)_{|j|}u_{kK}^{\xi-|j|}\big)=\sum_{n\ge|j|}(n)_{|j|}u_{kK}^{n-|j|}e^{-1}/n!=e^{-1}\sum_{n\ge 0}u_{kK}^n/n!=e^{-\sum_{\ell\in[L]}u_{k\ell}}$ it follows that the random variables $Y_{k\ell}$ are independent with $Y_{k\ell}$ Poisson distributed with parameter $u_{k\ell}$, $k,\ell\in[L]$. Theorem \ref{gwplimit} is applicable provided that $u_{KK}=1$. In this case, for the limiting $L$-type branching process the probability that a parent of type $k\in[L]$ produces for each $\ell\in[L]$ exactly $j_\ell$ children of type $\ell$ is given by $\prod_{\ell\in[L]}\pr(Y_{k\ell}=j_\ell)=\prod_{\ell\in[L]}u_{k\ell}^{j_\ell}e^{-u_{k\ell}}/j_\ell!$.
\end{example}
\begin{remark} \rm
   Without the condition $u_{KK}=1$, Lemma \ref{gwplemma} and, hence, Theorem \ref{gwplimit} in general fail. Consider for example the Wright--Fisher model with $K=2$ types. Fix $i\in\nz_0$. For all $N\ge i$, conditional on $X_1(0)=i$, the random variable $X_1(1)$ has a binomial distribution with parameter $N$ and $\pi_1:=(u_{11}i+u_{21}(N-i))/N$. This binomial distribution does not weakly converge as $N\to\infty$ as long as $u_{21}>0$, since $N\pi_1=u_{11}i+u_{21}(N-i)\to\infty$ in this case. Thus, (\ref{gwpassumption}) does not hold if $u_{22}<1$, so Lemma \ref{gwplemma} and Theorem \ref{gwplimit} are not applicable.
\end{remark}

\subsection{Some backward results} \label{somebackward}

For $N\in\nz$ define $S_N(E):=\{i=(i_k)_{k\in E}\in\nz_0^E:\sum_{k\in E}i_k\le N\}$. Fix $i=(i_k)_{k\in E}\in S_N(E)$. Suppose that one has taken in some generation $r\in\gz$ a sample of $|i|:=\sum_{k\in E}i_k$ individuals, where $i_k$ of these individuals are of type $k$, $k\in E$. For $j=(j_k)_{k\in E}\in S_N(E)$ let $A_{ij}$ denote the event that, for each $k\in E$, the $i_k$ individuals of type $k$ have exactly $j_k$ parents. Define $p_{ij}:=\pr(A_{ij})$ and $P:=(p_{ij})_{i,j\in S_N(E)}$. Clearly, all quantities $A_{ij}=A_{ij}(U)$, $p_{ij}=p_{ij}(U)$ and $P=P(U)$ depend on the mutation matrix $U$. For the model without mutation, that is, for $U=I$ (identity matrix), it is known (see, for example, \cite[Proposition 1]{Moehle2010}) that $P^{\rm rep}:=P(I)$ has entries
\begin{eqnarray}
   p_{ij}^{\rm rep}
   & = & \frac{(N-|i|)!\prod_{k\in E}i_k!}{(N-|j|)!\prod_{k\in E}j_k!}
         \sum_m \me\bigg(\prod_{s=1}^{|j|}\binom{\nu_{s,N}}{m_s}\bigg)\nonumber\\
   & = & \frac{\prod_{k\in E}i_k!}{\prod_{k\in E}j_k!}
         \sum_m \frac{\Phi_{|j|}(m_1,\ldots,m_{|j|})}{m_1!\cdots m_{|j|}!},
         \label{backtransnomut}
\end{eqnarray}
where $|i|:=\sum_{k\in E}i_k$, $|j|:=\sum_{k\in E}j_k$ and the sum extends over all $m=(m_1,\ldots,m_{|j|})\in\nz^{|j|}$ satisfying $m_1+\cdots+m_{j_1}=i_1$,
$m_{j_1+1}+\cdots+m_{j_1+j_2}=i_2,\ldots,m_{j_1+\cdots+j_{K-1}+1}+\cdots+m_{|j|}=i_K$.

For $k\in E$ let $e_k$ denote the $k$-th unit vector in $\rz^E$. From (\ref{backtransnomut}) it follows that, for all $N\in\nz\setminus\{1\}$,
$p_{2e_k,e_k}(I)=\Phi_1(2)=c_N$ does not depend on $k\in E$ and coincides with the coalescence probability (\ref{coalprob}).

In order to obtain formulas for $p_{ij}=p_{ij}(U)$ for general mutation matrix $U$ we proceed as follows. Let $P^{\rm mut}:=(p_{ij}^{\rm mut})_{i,j\in S_N(E)}$ denote the matrix with entries
\begin{equation} \label{bij}
   p_{ij}^{\rm mut}\ :=\ \sum_M \prod_{k\in E}\bigg( i_k!\prod_{\ell\in E}\frac{u_{\ell k}^{m_{k\ell}}}{m_{k\ell}!}\bigg),
   \quad i,j\in S_N(E),
\end{equation}
where the sum extends over all matrices $M=(m_{k\ell})_{k,\ell\in E}\in\nz_0^{E\times E}$ having row sums $\sum_{\ell\in E}m_{k\ell}=i_k$, $k\in E$, and column sums $\sum_{k\in E}m_{k\ell}=j_\ell$, $\ell\in E$. The matrix $P^{\rm mut}$ has a similar structure as the matrix $\Pi^{\rm mut}$ with entries (\ref{aij2}), but note that $u_{k\ell}$ is replaced by its transpose $u_{\ell k}$ and that $i$ and $j$ belong to $S_N(E)$ instead of $\Delta_N(E)$. For general mutation matrix $U$ the backward probabilities $p_{ij}=p_{ij}(U)$ can be calculated using the following result.
\begin{lemma}
   The backward matrix $P=(p_{ij})_{i,j\in S_N(E)}$ (for the model with general mutation matrix $U$) is given by
   \[
   P\ =\ P^{\rm mut}P^{\rm rep},
   \]
   where $P^{\rm mut}=(p_{ij}^{\rm mut})_{i,j\in S_N(E)}$ is the matrix with entries (\ref{bij}) and $P^{\rm rep}=P(I)$ is the backward matrix for the multi-type Cannings without mutation having entries (\ref{backtransnomut}).
\end{lemma}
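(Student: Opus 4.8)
The plan is to follow the proof of Lemma~\ref{lemma1} step by step, reading each generation backward in time. Since a generation consists of a reproduction step followed by a mutation step, the one-generation ancestral transition is obtained by first reversing the mutation step and then reversing the reproduction step; this explains the factor order $P=P^{\rm mut}P^{\rm rep}$, which is the time-reversed analogue of the decomposition $\Pi=\Pi^{\rm rep}\Pi^{\rm mut}$ of Lemma~\ref{lemma1}. Throughout, $i=(i_k)_{k\in E}$ and $j=(j_k)_{k\in E}$ are fixed elements of $S_N(E)$, and by the exchangeability of the model one may fix a concrete labelling of the sampled individuals consistent with the composition $i$.

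First I would condition on the offspring vector $\nu_N=(\nu_{1,N},\ldots,\nu_{N,N})$ and, more precisely, on the full reproduction structure, that is, on which of the $N$ children in generation $r$ descends from which of the $N$ parents in generation $r-1$. Conditionally on this structure the $N$ children mutate independently, a child of a type-$k$ parent acquiring type $\ell$ with probability $u_{k\ell}$ (the $k$-th row of $U$). The event $A_{ij}$, rewritten for our fixed sample, then becomes a statement about which parents are hit by the sampled lineages and with which multiplicities, together with the requirement that the observed types of the sampled individuals compose exactly as $i$.

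Second, for a fixed reproduction structure I would expand the conditional probability of $A_{ij}$ as a sum over matrices $M=(m_{k\ell})_{k,\ell\in E}\in\nz_0^{E\times E}$, where $m_{k\ell}$ counts the sampled individuals that end up with observed type $k$ and whose parent has type $\ell$; the constraints are $\sum_{\ell\in E}m_{k\ell}=i_k$ (the observed composition equals $i$) and $\sum_{k\in E}m_{k\ell}=d_\ell$ for an intermediate vector $d=(d_\ell)_{\ell\in E}\in S_N(E)$ recording how many sampled individuals have a type-$\ell$ parent. Each term of the sum carries the product $\prod_{k,\ell\in E}u_{\ell k}^{m_{k\ell}}$ --- note the transposed indices, since a forward mutation $\ell\to k$ corresponds backward to a type-$k$ sampled individual whose ancestor is of type $\ell$ --- together with the appropriate multinomial coefficients. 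Taking the expectation over $\nu_N$ and exploiting the exchangeability of $\nu_{1,N},\ldots,\nu_{N,N}$ exactly as in the derivation of the no-mutation backward formula (\ref{backtransnomut}) (compare the passage from (\ref{transalt}) to (\ref{trans}) in the forward case), the sum over labelled reproduction structures collapses into the descendant variables $D_k(i)$ and hence into the functions $\Phi_{|j|}$. Regrouping the resulting double sum according to the intermediate vector $d$, the $d$-indexed summands are recognised as $p_{id}^{\rm mut}$ from (\ref{bij}) and $p_{dj}^{\rm rep}$ from (\ref{backtransnomut}), so that $p_{ij}=\sum_{d\in S_N(E)}p_{id}^{\rm mut}p_{dj}^{\rm rep}=(P^{\rm mut}P^{\rm rep})_{ij}$.

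The step I expect to be the main obstacle is the combinatorial bookkeeping in the third paragraph: one has to keep the observed type of each sampled individual strictly separate from the type of its ancestor so that the mutational weights detach cleanly as a factor carrying the transposed entries $u_{\ell k}$, and one has to verify that the exchangeability argument that produces the $\Phi_{|j|}$'s in the no-mutation case still goes through with mutation present. The latter is exactly where one uses that mutation acts independently of reproduction and independently across individuals, so that conditioning on $\nu_N$ and then averaging reproduces the no-mutation computation with the $U$-factors riding along multiplicatively and untouched by the averaging.
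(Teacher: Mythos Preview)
Your proposal is correct and follows essentially the same route as the paper: the time-reversal argument (mutation first, then reproduction backward), the decomposition via matrices $M=(m_{k\ell})$ with row sums $i_k$ and column sums $d_\ell$, the transposed mutation entries $u_{\ell k}$, and the regrouping over the intermediate vector $d\in S_N(E)$ to recognise the product $P^{\rm mut}P^{\rm rep}$. The paper's proof is in fact terser---it asserts the analogue of your conditioned expansion directly as Eq.~(\ref{local1}) with only the one-line justification ``having this factorization in mind''---whereas you spell out the conditioning on the reproduction structure and the role of exchangeability more explicitly; but the underlying argument is the same.
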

\begin{proof}
   Fix $i=(i_k)_{k\in E},j=(j_k)_{k\in E}\in S_N(E)$. Recall that the model is defined forward in time by a reproductive step (involving the offspring sizes $\nu_1,\ldots,\nu_N$) followed by a mutational step (involving the mutation matrix $U$). Backward in time we therefore first have to take into account the mutational step and then the reproductive step. Having this factorization in mind it follows that
   \begin{equation} \label{local1}
      p_{ij}(U)
      \ =\ \sum_M \bigg(
          \prod_{k\in E}\bigg(
             i_k!\prod_{\ell\in E}\frac{u_{\ell k}^{m_{k\ell}}}{m_{k\ell}!}
          \bigg)
          \bigg)p_{dj}(I),
   \end{equation}
   where the sum extends over all matrices $M=(m_{k\ell})_{k,\ell\in E}$ having row sums $\sum_{\ell\in E}m_{k\ell}=i_k$ for all $k\in E$ and $d:=(d_\ell)_{\ell\in E}$ is defined via $d_\ell:=\sum_{k\in E}m_{k\ell}$ for all $\ell\in E$. Note that $|d|:=\sum_{\ell\in E}d_\ell=\sum_{k,\ell\in E} m_{k\ell}=\sum_{k\in E}i_k=:|i|$. In particular, $d\in S_N(E)$. Now split the sum in (\ref{local1}) into
   \begin{equation} \label{local2}
      p_{ij}(U)\ =\ \sum_{d\in S_N(E)}
      \bigg(
         \sum_M \prod_{k\in E}
         \bigg(
            i_k!\prod_{\ell\in E}\frac{u_{\ell k}^{m_{k\ell}}}{m_{k\ell}!}
         \bigg)
      \bigg)p_{dj}(I),
   \end{equation}
   where $\sum_M$ extends over all matrices $M=(m_{k\ell})_{k,\ell\in E}$ satisfying $\sum_{\ell\in E}m_{k\ell}=i_k$ for all $k\in E$ and $\sum_{k\in E}m_{k\ell}=d_\ell$ for all $\ell\in E$. Note that the sum $\sum_M$ in (\ref{local2}) is empty if $|d|\ne |i|$. The right-hand side in (\ref{local2}) is equal to $\sum_{d\in S_N(E)}p_{id}^{\rm mut}p_{dj}^{\rm rep}=(P^{\rm mut}P^{\rm rep})_{ij}$. Thus, $P=P^{\rm mut}P^{\rm rep}$.
\end{proof}
\begin{remark} \rm \label{open}
   The matrix $P$ is in general not stochastic, as already mentioned in the remarks on page 715 of \cite{Moehle2010}. It is hence not straightforward to define a proper ancestral process in the same way as for the model studied in Section \ref{first}.
\end{remark}

\subsection{Multi-type Kimura model} \label{kimuramodel}
Let $c\in\nz$. The Cannings population model, where the vector $\nu_N:=(\nu_{1,N},\ldots,\nu_{N,N})$ of offspring sizes has the symmetric multi-hypergeometric distribution
\[
\pr(\nu=j)\ =\ \frac{1}{\binom{cN}{N}}\prod_{i\in[N]}\binom{c}{j_i}
\]
for $j=(j_i)_{i\in[N]}\in\nz_0^N$ with $\sum_{i\in[N]}j_i=N$, is called the Kimura model with parameter $c$; see \cite[p.~636]{Gladstien1978} or \cite{Kimura1957}. Since $D=(D_k)_{k\in E}$ has a multi-hypergeometric distribution with parameters $N$, $cN$ and $ci_k$, $k\in E$, i.e., $\pr(D=j)=\binom{cN}{N}^{-1}\prod_{k\in E}\binom{ci_k}{j_k}$, $i,j\in \Delta_N(E)$, it follows from (\ref{trans}) that the forward process has transition probabilities
\begin{equation} \label{kimura}
   \pi_{ij}
   \ =\ \frac{1}{\binom{cN}{N}}\sum_M\prod_{k\in E}
   \bigg(
      \binom{ci_k}{j_k}d_k!
      \prod_{\ell\in E}\frac{u_{k\ell}^{m_{k\ell}}}{m_{k\ell}!}
   \bigg),
\end{equation}
$i=(i_k)_{k\in E},j=(j_k)_{k\in E}\in\Delta_N(E)$, where the sum extends over all matrices $M=(m_{k\ell})_{k,\ell\in E}$ having column sums $\sum_{k\in E}m_{k\ell}=j_\ell$, $\ell\in E$, and $d_k:=\sum_{\ell\in E}m_{k\ell}$, $k\in E$. Eq.~(\ref{kimura}) does not seem to simplify much further.

It is easily checked that the offspring sizes $\nu_1,\ldots,\nu_N$ are asymptotically independent with $\nu_1\to\xi$ in distribution as $N\to\infty$, where $\xi$ has a binomial distribution with parameters $c$ and $1/c$.

If $L:=K-1\in\nz$ and $u_{KK}=1$, then Theorem \ref{gwplimit} is applicable. For $n\in\nz_0$ and $x\in\rz$,
\begin{eqnarray*}
   &   & \hspace{-12mm}\me\big((\xi)_nx^{\xi-n}\big)
   \ = \ \sum_{m=n}^c (m)_nx^{m-n}\binom{c}{m}\bigg(\frac{1}{c}\bigg)^m\bigg(1-\frac{1}{c}\bigg)^{c-m}\\
   & = & (c)_n \bigg(\frac{1}{c}\bigg)^n
         \sum_{k=0}^{c-n}\binom{c-n}{k}
         \bigg(\frac{x}{c}\bigg)^k\bigg(1-\frac{1}{c}\bigg)^{c-n-k}\\
   & = & (c)_n \bigg(\frac{1}{c}\bigg)^n\bigg(1-\frac{1-x}{c}\bigg)^{c-n}.
\end{eqnarray*}
Thus, for the limiting $L$-type Galton--Watson branching process the probability (\ref{ykdist}) that an individual of type $k\in[L]$ produces for each $\ell\in[L]$ exactly $j_\ell$ children of type $\ell$ is
\[
\pr(Y_k=j)
\ =\ (c)_{|j|}\bigg(\frac{1}{c}\bigg)^{|j|}\bigg(1-\frac{u_k}{c}\bigg)^{c-|j|}
\prod_{l\in[L]}\frac{u_{k\ell}^{j_\ell}}{j_\ell!},
\]
$j=(j_\ell)_{\ell\in[L]}\in\nz_0^L$, where $|j|:=\sum_{\ell\in[L]} j_\ell$ and $u_k:=\sum_{\ell\in[L]}u_{k\ell}$. From Lemma \ref{facmom} it follows that $Y_k$ has joint descending factorial moments
\[
\me\bigg(\prod_{\ell\in[L]}(Y_{k\ell})_{m_\ell}\bigg)
\ =\ (c)_{|m|}\bigg(\frac{1}{c}\bigg)^{|m|}\prod_{\ell\in[L]}u_{k\ell}^{m_\ell},
\quad (m_\ell)\in\nz_0^L,
\]
where $|m|:=\sum_{\ell\in[L]} m_\ell$. Note that $\me(\xi)=1$ and $\me((\xi)_2)=(c)_2(1/c)^2=1-1/c$. For $\ell_1,\ell_2\in[L]$ with $\ell_1\ne\ell_2$ it follows from (\ref{cov}) that ${\rm Cov}(Y_{k\ell_1},Y_{k\ell_2})=-u_{k\ell_1}u_{k\ell_2}/c\le 0$. If the mutation probabilities are strictly positive, then $Y_{k\ell_1}$ and $Y_{k\ell_2}$ are negatively correlated.

\section{Discussion and open problems} \label{discussion}

In Section \ref{first} we have analyzed the ancestry of a multi-type Cannings model with fixed subpopulation sizes and mutation leading to limiting coalescent processes with mutation (see Theorem \ref{main}) enjoying the exchangeability and consistency property.

We have also studied (see Section \ref{second}) a different but closely related Cannings model with constant total population size but variable subpopulation sizes with an emphasis on its forward structure. Under certain conditions its forward structure can be approximated by a limiting multi-type branching process (Theorem \ref{gwplimit}). However, questions concerning its ancestral structure (see Remark \ref{open}) and duality results linking its forward and backward structure, including algebraic approaches to duality, remain open.

Particular classes of multi-type Cannings models have not been discussed in this paper. Schweinsberg~\cite{Schweinsberg2003} studies the ancestry of a class of single-type Cannings model obtained via sampling without replacement from a supercritical branching process. Huillet et al.~\cite{HuilletMoehle2022} study analog single-type models based on a sampling with replacement strategy. We leave the study of multi-type versions of the models of \cite{Schweinsberg2003} and \cite{HuilletMoehle2022} for future work.



\footnotesize


\bibliographystyle{plain}
%


\end{document}